\newtheorem{thm}{Theorem}[section]
\newtheorem{lma}[thm]{Lemma}
\newtheorem{prop}[thm]{Proposition}
\newtheorem{cor}[thm]{Corollary}
\theoremstyle{definition}
\newtheorem{defn}[thm]{Definition}
\newtheorem{rmk}[thm]{Remark}
\newtheorem{ex}[thm]{Example}
\newcommand{\lea}{\# l}
\newcommand{\md}{\underline{d}}
\newcommand{\mdeg}{\underline{\operatorname{deg}}}
\newcommand{\val}{\operatorname{val}}
\newcommand{\Cliff}{\mathrm{Cliff}}
\newcommand{\cO}{\mathcal{O}}
\newcommand{\Pic}{{\operatorname{{Pic}}}}
\newcommand{\stab}{\mathrm{stab}}
\newcommand{\Dh}{\mathrm{Dh}}
\newcommand{\Br}{\mathrm{Br}}
\newcommand{\GG}{\mathbb{G}}
\newcommand{\RR}{\mathbb{R}}
\newcommand{\ud}{\underline{d}}
\title{A Clifford inequality for semistable curves} 
\author{Karl Christ}
\thanks{2020 Mathematics Subject Classification: 14H51, 14H40, 14H20.\\
The author was supported by the Israel Science Foundation (grant No. 821/16) and by the Center for Advanced Studies at BGU}
\address[Christ]{$~^1$Department of Mathematics\\
	Ben-Gurion University of the Negev\\P.O.Box 653 \\Be'er Sheva\\ 84105\\  Israel and $~^2$Institute of Algebraic Geometry\\
	Leibniz University Hannover\\Welfengarten 1 \\30167 Ha\-no\-ver\\  Germany }\email{kchrist@math.uni-hannover.de}
\begin{document}
	
\maketitle

\begin{abstract}
    Let $X$ be a semistable curve and $L$ a line bundle whose multidegree is uniform, i.e., in the range between those of the structure sheaf and the dualizing sheaf of $X$. We establish an upper bound for $h^0(X,L)$, which generalizes the classic Clifford inequality for smooth curves. The bound depends on the total degree of $L$ and connectivity properties of the dual graph of $X$. It is sharp, in the sense that on any semistable curve there exist line bundles with uniform multidegree that achieve the bound. 
\end{abstract}

\section{Introduction}

For a smooth curve $X$ of genus $g$, the classic Clifford inequality states that 
\begin{equation}\label{eq:Clifford}
    h^0(X, L) \leq \frac{d}{2} + 1,
\end{equation}
for any line bundle $L$ with degree $d$ in the special range $0 \leq d \leq 2g - 2$. It is the first part of Clifford's Theorem, and the second part states that this bound is achieved only by the structure sheaf $\mathcal O_X$, the canonical sheaf $\omega_X$, and multiples of the $g_2^1$ if $X$ is hyperelliptic.

In this paper, we are interested in generalizations of the Clifford inequality \eqref{eq:Clifford} to semistable curves. Recall that $X$ is semistable if it is reduced with nodal singularities, and the degree of the dualizing sheaf $\omega_X$ is nonnegative on each irreducible component of $X$.
Semistable curves, or rather the more restrictive class of stable curves, are the most commonly studied singular curves, since they give a well-understood compactification of the moduli space of smooth curves \cite{DM}. They provide important tools for understanding the geometry of the moduli space, as well as studying smooth curves via degeneration techniques.

Eisenbud, Koh and Stillman \cite{EKS} showed in an appendix with Harris, that the Clifford inequality \eqref{eq:Clifford} still holds if $X$ is reduced and irreducible, but not necessarily smooth. On the other hand, if $X$ is reducible, an upper bound on $h^0(X, L)$ purely in terms of the total degree $d$ is impossible. Indeed, by setting the degree of $L$ on one irreducible component very negative, one obtains an arbitrarily large degree of $L$ on another component, while maintaining the total degree $d$. Arbitrarily large values of $h^0(X, L)$ can be realized in this way for any fixed $d$.

Thus the issue lies with the distribution of the total degree $d$ of $L$ among the irreducible components of $X$. We collect the degrees $\deg(L|_{X_v})$ of $L$ on irreducible components $X_v$ in a tuple of integers $\md = \mdeg(L)$, the \emph{multidegree} of $L$.

We restrict the multidegrees that we allow for to \emph{uniform} ones. They generalize the numerical condition $0 \leq d \leq 2g - 2$ to multidegrees. That is, $L$ has uniform multidegree if \[\underline 0 \leq \mdeg(L) \leq \mdeg(\omega_X),\]
where $\underline 0$ has value $0$ on each irreducible component and $\mdeg(\omega_X)$ is the multidegree of the dualizing sheaf $\omega_X$ on $X$.

\subsection{Results}

To state our main result, we need one more combinatorial invariant. Recall that to any nodal curve $X$ we can associate its dual graph $\GG_X$. The Clifford inequality for uniform multidegrees depends on certain connectivity properties of $\GG_X$. 

More precisely, we denote by $\GG_X^\Br$ the graph obtained from $\GG_X$ by contracting all edges of $\GG_X$ that are not bridges. That is, the edges whose removal does not increase the number of connected components of $\GG_X$. Thus $\GG_X^\Br$ contains an edge for each separating node of $X$ and a vertex for each connected component of the partial normalization of $X$ at those separating nodes. If $X$ is connected, $\GG_X^\Br$ is a tree since each of its edges is a bridge, and we call it the \emph{tree of $2$-edge-connected components}. In this case, we denote by $\lea\left(\GG_X^\Br\right)$ the number of leaves of $\GG_X^\Br$, that is, the number of vertices adjacent to a single edge. In case $\GG_X^\Br$ contains a single vertex, we set $\lea\left(\GG_X^\Br\right) = 2$. If $X$ is not connected, we define $\lea\left(\GG_X^\Br\right)$ by summing the values defined as above for each connected component. See Subsection~\ref{subsec: tree of components} for further details.

\begin{thm}[Clifford inequality for uniform multidegrees]\label{thm:main}
Let $X$ be a semistable curve and $L$ a line bundle on $X$ with uniform multidegree and of total degree $d$. Then \begin{equation} \label{eq:uniform bound} h^0(X,L) \leq \frac{d}{2} + \frac{\lea\left(\GG_X^\Br\right)}{2}.\end{equation}
\end{thm}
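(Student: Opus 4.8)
The plan is to induct on the number of separating nodes of $X$, using the combinatorial structure of $\GG_X^\Br$ and reducing the general case to the $2$-edge-connected case (or more precisely the case where $X$ has no separating nodes). The base case is when $\GG_X^\Br$ is a single vertex, i.e., $X$ has no separating nodes; here $\lea(\GG_X^\Br) = 2$ by convention and the bound becomes the Clifford inequality $h^0(X,L) \le d/2 + 1$ for a semistable curve without separating nodes. For the inductive step, I would pick a leaf $w$ of the tree $\GG_X^\Br$ (assuming $X$ is connected, the disconnected case following by additivity of $h^0$, $d$, and $\lea$ over connected components), corresponding to a separating node $p$ that splits $X = X_1 \cup_p X_2$, where $X_1$ is the ``component'' hanging off the leaf (a maximal subcurve attached to the rest only through $p$) and $X_2$ is the rest.

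\textbf{Normalizing at the separating node.} At the separating node $p$, there is an exact sequence relating $H^0(X,L)$ to $H^0(X_1, L|_{X_1})$ and $H^0(X_2, L|_{X_2})$: sections of $L$ on $X$ are pairs of sections on $X_1$ and $X_2$ that agree at $p$, so
\begin{equation}\label{eq:gluing}
h^0(X,L) \le h^0(X_1, L|_{X_1}) + h^0(X_2, L|_{X_2}) - \epsilon,
\end{equation}
where $\epsilon \in \{0,1\}$ accounts for the gluing condition, with $\epsilon = 1$ unless both restrictions vanish at $p$. A cleaner route is to twist: replacing $L$ by $L$ twisted down by $p$ on one side (say $L|_{X_1}(-p)$) one often gets $h^0(X,L) \le h^0(X_1, L|_{X_1}) + h^0(X_2, L|_{X_2}(-p))$ or a symmetric variant, and one is free to choose the twist that keeps both restricted multidegrees in (or close to) the uniform range. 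The key numerical point is that $\mdeg(\omega_X)$ restricted to $X_i$ differs from $\mdeg(\omega_{X_i})$ only at the point $p$ (by the adjunction $\omega_X|_{X_i} = \omega_{X_i}(p)$), so $L|_{X_i}$, possibly after a twist by $\pm p$, has uniform multidegree on $X_i$, making the inductive hypothesis applicable.

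\textbf{Bookkeeping the leaves.} If $d_i = \deg(L|_{X_i})$ (after whatever twist is chosen), then $d_1 + d_2 = d$ up to a correction of $0$ or $1$ from the twist at $p$, and $\lea(\GG_{X_1}^\Br) + \lea(\GG_{X_2}^\Br)$ relates to $\lea(\GG_X^\Br)$: the subcurve $X_1$ contributes a tree that is a subtree of $\GG_X^\Br$ together with the leaf $w$, and $X_2$'s tree is $\GG_X^\Br$ with that leaf pruned. One has to check that $\lea(\GG_{X_1}^\Br) + \lea(\GG_{X_2}^\Br) \le \lea(\GG_X^\Br) + 2$ (the $+2$ matching the two ``artificial'' leaves created by cutting at $p$, consistent with the convention that a single-vertex tree has $\lea = 2$), with the possible $\pm 1$ discrepancies in the degree sum absorbed correctly. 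Combining \eqref{eq:gluing} (or its twisted version) with the two inductive bounds $h^0(X_i, L|_{X_i}) \le d_i/2 + \lea(\GG_{X_i}^\Br)/2$ then yields \eqref{eq:uniform bound} after this arithmetic.

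\textbf{The main obstacle} is the base case: proving the Clifford inequality $h^0(X,L) \le d/2 + 1$ for a semistable curve $X$ \emph{without separating nodes} and $L$ of uniform multidegree. Unlike the irreducible case handled by Eisenbud--Koh--Stillman, here $X$ is genuinely reducible, and one cannot directly invoke the smooth or irreducible Clifford inequality. I expect this requires a separate argument — perhaps a clever choice of a connected subcurve $Y \subsetneq X$ and an exact sequence $0 \to L|_{X \setminus Y}(-Y \cap (X\setminus Y)) \to L \to L|_Y \to 0$, together with an inductive/extremal analysis of which multidegrees maximize $h^0$, using $2$-edge-connectedness to control the degrees $L$ picks up at the nodes where $Y$ meets its complement. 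Making the twisting in the inductive step interact cleanly with the uniform-multidegree constraint — ensuring one never leaves the allowed range and that the degree corrections at $p$ are always accounted for — is the second delicate point, but it is bookkeeping rather than a conceptual hurdle.
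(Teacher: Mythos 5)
Your overall strategy --- reduce to the case of no separating nodes by cutting at bridges and bookkeeping leaves of $\GG_X^\Br$ --- matches the bridge-handling part of the paper's argument (the paper inducts on the number of non-loop edges rather than on separating nodes, but its ``Step 2'' is essentially your inductive step). However, there is a genuine gap: the statement you isolate as ``the main obstacle,'' namely the classic Clifford inequality $h^0(X,L)\leq \frac{d}{2}+1$ for a $2$-edge-connected semistable curve with uniform multidegree, is the actual mathematical core of the theorem, and you do not prove it; you only speculate that some exact-sequence argument might work. This case is not available in the literature in the required generality (Franciosi--Tenni prove it only under extra hypotheses), and the paper devotes two further steps to it. Concretely: if some $\md_v=0$, the paper uses the Dhar subgraph $\Dh(v,\md)$ to find a subcurve $Y$ with $h^0(Y,L|_Y)\leq 1$ whose complement $Y^c$ carries a uniform twist $L|_{Y^c}(-Y\cap Y^c)$, and uses $2$-edge-connectedness to bound $\lea\bigl((\GG_{Y^c})^\Br\bigr)$ by the number of connecting nodes (Lemma~\ref{lma:splitting of number of leaves}(2)); if all $\md_v>0$, it normalizes at an arbitrary non-loop edge $e$, twists down by both preimages (which stays uniform precisely because all $\md_v>0$), and uses that $(\GG_X-e)^\Br$ still has exactly two leaves (Lemma~\ref{lma: removing edge}). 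Without some such argument your induction has no base.

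A secondary issue is in your inductive step at a bridge. When $L|_{X_1}$ fails to be uniform on $X_1$ (which happens exactly when $\md_v=2g_v-2+\val(v)$, since the valence drops by one after cutting), twisting down by $p_1$ restores uniformity but costs a unit of degree, and the inequality $h^0(X,L)\leq h^0(X_1,L_1)+h^0(X_2,L_2)-1$ is then not strong enough unless you also know $h^0(X_1,L_1)=h^0(X_1,L_1(-p_1))$, i.e., that $p_1$ is a base point of $L_1$. This is not mere bookkeeping: the paper establishes it by passing to the residual $\omega_X\otimes L^{-1}$, which has degree $0$ on $X_v$, arranging (via a separate reduction) that its sections do not all vanish on $X_v$, and then applying Riemann--Roch (Corollary~\ref{cor:RR applied1}) to conclude $p_1$ is a base point of $L_1$. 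You would need to supply this argument, and also the companion reduction handling the case where $\md_v=0$ and all sections of $L$ vanish on $X_v$ (the paper's Step 1), before the bridge step closes.
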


See Theorem~\ref{thm:clifford for uniform multidegrees}. 
If $X$ is connected and contains no separating nodes, then by definition $\lea\left(\GG_X^\Br\right) = 2$ and hence we immediately obtain the classic Clifford inequality in this case:

\begin{cor}
Let $X$ be a connected, semistable curve without separating nodes and $L$ a line bundle with uniform multidegree of total degree $d$. Then $L$ satisfies the classic Clifford inequality. 
\end{cor}

In Proposition~\ref{prop:vertex of valence three} and Proposition~\ref{prop:generic case}, we show that the bound of Theorem~\ref{thm:main} is sharp, but not achieved generically:

\begin{thm} \label{thm:main2}
On any semistable curve $X$ there exist line bundles $L$ with uniform multidegree such that \[h^0(X,L) = \frac{\deg(L)}{2} + \frac{\lea\left(\GG_X^\Br\right)}{2}.\] A general line bundle $L$ of fixed uniform multidegree satisfies the classic Clifford inequality.
\end{thm}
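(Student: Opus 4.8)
I plan to prove the two halves of Theorem~\ref{thm:main2} — that the bound \eqref{eq:uniform bound} is attained on every semistable curve, and that it is \emph{not} attained by a general line bundle of fixed uniform multidegree — separately, as Proposition~\ref{prop:vertex of valence three} and Proposition~\ref{prop:generic case} respectively. In both I may assume $X$ is connected, since $h^0$, total degree, and $\lea\!\left(\GG_X^\Br\right)$ are all additive over connected components. I write $X^{(1)},\dots,X^{(m)}$ for the $2$-edge-connected components of $X$ — the connected components of the partial normalization of $X$ at its separating nodes — set $g_j=p_a(X^{(j)})$, and let $T=\GG_X^\Br$, with a vertex $v_j$ for each $X^{(j)}$ and an edge for each separating node of $X$.

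\emph{Attaining the bound.} If $m=1$ I take $L=\mathcal{O}_X$, so $h^0(X,L)=1=\tfrac{0}{2}+\tfrac{\lea(T)}{2}$, since $\lea(T)=2$ by convention. If $m\ge2$, I define $L$ componentwise and glue by arbitrary isomorphisms of fibres at the separating nodes: on a leaf $X^{(j)}$, with $p_j$ the unique separating node meeting it, set $L|_{X^{(j)}}=\omega_X|_{X^{(j)}}$, which by adjunction is $\omega_{X^{(j)}}(p_j)$; on an internal $X^{(j)}$ with $g_j\ge1$, set $L|_{X^{(j)}}$ to be a general line bundle of multidegree $\underline 0$; and on an internal $X^{(j)}\cong\PP^1$ set $L|_{X^{(j)}}=\mathcal{O}_{\PP^1}$. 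This $L$ is uniform because $\deg(\omega_X)$ is nonnegative on every irreducible component. I use here that every leaf component has $g_j\ge1$ (a $2$-edge-connected genus-$0$ component is a single $\PP^1$, which as a leaf carries exactly one node, forcing $\deg(\omega_X|_{X^{(j)}})=-1$, impossible) and that a general multidegree-$\underline 0$ line bundle on a connected nodal curve of positive genus has no nonzero section. To compute $h^0(X,L)$: a section is a tuple $(\sigma_j)$ with $\sigma_j\in H^0(X^{(j)},L|_{X^{(j)}})$ agreeing at the separating nodes; by Riemann--Roch and Serre duality $h^0(X^{(j)},\omega_{X^{(j)}}(p_j))=g_j=h^0(X^{(j)},\omega_{X^{(j)}})$, so on a leaf every $\sigma_j$ already vanishes at $p_j$; on an internal component of positive genus $\sigma_j=0$; and on an internal $\PP^1$ the constant $\sigma_j$ is common along any maximal connected union of internal $\PP^1$-components, which is a proper subtree of $T$ joined by an edge to a positive-genus vertex, hence is forced to $0$. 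Thus $h^0(X,L)=\sum_{v_j\text{ leaf}}g_j$, while $\deg L=\sum_{v_j\text{ leaf}}(2g_j-1)=2\sum_{v_j\text{ leaf}}g_j-\lea(T)$, and therefore $h^0(X,L)=\tfrac{\deg L}{2}+\tfrac{\lea(T)}{2}$, which is Proposition~\ref{prop:vertex of valence three}.

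\emph{Non-attainment for a general bundle.} Fix a uniform multidegree $\md$ with $|\md|=d$. Since $L\mapsto h^0(X,L)$ is upper semicontinuous on $\Pic^\md(X)$, it suffices to exhibit one $L_0\in\Pic^\md(X)$ with $h^0(X,L_0)\le\tfrac d2+1$; I take $L_0$ whose restriction to each $X^{(j)}$ is general of the induced multidegree and whose gluing data is general, and then re-run the decomposition above for $L_0$, inducting on the number $m-1$ of separating nodes. The base case $m=1$ is the Corollary following Theorem~\ref{thm:main}, refined by the remark that a general line bundle on a $2$-edge-connected curve satisfies $h^0\le\tfrac12\deg$ (up to a correction controlled by the separating nodes of $X$ lying on the piece), since equality in Clifford's inequality cuts out a proper closed subvariety of $\Pic^\md$. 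In the inductive step I split off a leaf component $X^{(j)}$ along the normalization sequence $0\to L_0\to L_0|_{X^{(j)}}\oplus L_0|_{X'}\to\CC_{p_j}\to0$ with $X'=\overline{X\setminus X^{(j)}}$, using that for a general $L_0$ the evaluation at a separating node is nonzero on whichever side has positive degree there — so that node contributes a genuine rank-one drop — while on a side where $L_0$ has degree $0$ along the component through the node the sections are killed by the gluing to the sectionless far side. Collecting the terms, the per-piece ``$+1$'' of Clifford that in the attaining construction summed to $\lea(T)/2$ collapses to a single $+1$, giving $h^0(X,L_0)\le\tfrac d2+1$, and hence Proposition~\ref{prop:generic case}.

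\emph{Expected obstacle.} The attaining bundle and its $h^0$-computation are combinatorics together with Riemann--Roch. The real work is in the second part: one must control the rank of the evaluation map in the normalization sequence for a \emph{general} $L_0$, ruling out base points that the global gluing pattern might force and accounting carefully for the components on which $L_0$ carries no sections, and one needs the Clifford bound on the $2$-edge-connected pieces in the form that survives allowing their marked points (the separating nodes of $X$) into the base locus. That bookkeeping — rather than any single inequality — is where the difficulty lies, and it is essentially a refinement of the argument already used to prove Theorem~\ref{thm:main}.
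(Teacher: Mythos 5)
Your first half is correct and is essentially the paper's proof of Proposition~\ref{prop:vertex of valence three}: place $\omega_{Y_i}(p_i)$ on each leaf piece and something sectionless on the internal part (the paper simply takes $\mathcal{O}_Y$ on all of the connected internal subcurve $Y$, whose constant sections die because they must vanish at the base points $p_i$; your variant with general degree-$\underline{0}$ bundles works equally well), then apply Corollary~\ref{cor:RR applied2} and Lemma~\ref{lma:subcurves}.

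The second half has a genuine gap, and it sits exactly in the steps you defer to ``bookkeeping.'' You induct on the number of separating nodes, splitting $X$ into its $2$-edge-connected pieces, whereas the paper's Proposition~\ref{prop:generic case} inducts on the number of non-loop edges, normalizing one node at a time (bridges and non-bridges alike). Two things break in your version. First, the restriction of a uniform $\md$ to a $2$-edge-connected piece $X^{(j)}$ need not be uniform on $X^{(j)}$: a component $X_v$ meeting a separating node can have $\md_v = 2g_v-2+\val_{\GG_X}(v)$, which leaves the uniform range once the valence drops by one; so neither Theorem~\ref{thm:clifford for uniform multidegrees} nor your inductive hypothesis applies to $L_0|_{X^{(j)}}$, only to a twist by the marked points --- and the paper's mechanism for handling this (the residual argument via Corollary~\ref{cor:RR applied1} shows the node is then a base point) simultaneously kills the rank-one drop at that node that your count $\tfrac{d}{2}+m-(m-1)$ requires. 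Second, your assertion that for a general $L_0$ the evaluation at a separating node is nonzero on whichever side has positive degree there is unproved and false as stated: all sections of a general $L_0$ can vanish on the component through the node even when that side has positive total degree (e.g.\ degree $0$ on that component and positive genus), so the node contributes no drop and the sum does not collapse to $\tfrac{d}{2}+1$ by this mechanism. The devices that make the paper's induction close are (i) the observation that at least one of $L^\nu$, $L^\nu(-p_1)$, $L^\nu(-p_2)$, $L^\nu(-p_1-p_2)$ always has uniform multidegree on the partial normalization, and (ii) the fact that a general $L$ over a fixed $L^\nu$ drops rank by one across a non-bridge because the fiber of $\Pic(X)\to\Pic(X^\nu)$ is $k^*$ (last clause of Lemma~\ref{lma:neutral pairs}). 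Your sketch contains no substitute for either, and the ``refined'' generic base case on the $2$-edge-connected pieces (``up to a correction controlled by the separating nodes'') is precisely the unproven statement you would need.
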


Instead of uniform multidegrees, there are other choices for classes of multidegrees one can consider. One of the most important ones is the class of semistable (or more restrictively, stable) multidegrees, used in the construction of the universal compactified Jacobian~\cite{Caporasocompactification}. In general, there are uniform multidegrees that are not semistable and \emph{vice versa}. Furthermore, the class of semistable multidegrees have weaker properties with respect to an upper bound on $h^0(X,L)$. That is, in general, they do not satisfy the inequality in Theorem~\ref{thm:clifford for uniform multidegrees}. See Example~\ref{ex:semistable clifford}. We show however in Lemma~\ref{lma:stable in degree g-1}, that every stable multidegree of total degree $g - 1$ is uniform and obtain:
\begin{cor}
Suppose $X$ is a connected, stable curve without separating nodes. Then every line bundle $L$ of degree $g - 1$ with stable multidegree satisfies the classic Clifford inequality. 
\end{cor}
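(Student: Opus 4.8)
The plan is to deduce the Corollary directly from Theorem~\ref{thm:main} and Lemma~\ref{lma:stable in degree g-1}, so the argument should be short. First I would apply Lemma~\ref{lma:stable in degree g-1} to $L$: it has stable multidegree and total degree $g-1$, hence its multidegree is uniform, i.e.\ $\underline 0 \le \mdeg(L) \le \mdeg(\omega_X)$. Thus $L$ falls under the hypotheses of Theorem~\ref{thm:main}.

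Next I would identify $\GG_X^\Br$. Since $X$ is connected, $\GG_X$ is connected; since $X$ has no separating nodes, no edge of $\GG_X$ is a bridge. Therefore forming $\GG_X^\Br$ contracts every edge of $\GG_X$, and connectedness forces the result to be a single vertex. By the convention fixed in the introduction, $\lea\left(\GG_X^\Br\right) = 2$ in this case.

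Finally, plugging $d = g-1$ and $\lea\left(\GG_X^\Br\right) = 2$ into \eqref{eq:uniform bound} gives
\[
h^0(X,L) \le \frac{g-1}{2} + \frac{2}{2} = \frac{g-1}{2} + 1,
\]
which is precisely the classic Clifford inequality \eqref{eq:Clifford} in degree $d = g-1$. Equivalently, one can quote the Corollary immediately following Theorem~\ref{thm:main}, which already packages the case of a connected semistable curve without separating nodes, and feed it the uniformity supplied by Lemma~\ref{lma:stable in degree g-1}.

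I do not expect any genuine obstacle: all of the mathematical substance sits in Theorem~\ref{thm:main} and Lemma~\ref{lma:stable in degree g-1}, both of which I am free to assume. The only points requiring a line of care are translating ``no separating nodes'' into ``no bridges in $\GG_X$'', so that $\GG_X^\Br$ genuinely collapses to a point, and invoking the convention $\lea\left(\GG_X^\Br\right) = 2$ for the one-vertex graph rather than its literal leaf count.
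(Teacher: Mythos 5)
Your proposal is correct and follows exactly the route the paper intends: Lemma~\ref{lma:stable in degree g-1} supplies uniformity of the multidegree, and Theorem~\ref{thm:clifford for uniform multidegrees} with $\lea\left(\GG_X^\Br\right)=2$ (no bridges, so $\GG_X^\Br$ is a single vertex) yields the classic bound. Nothing is missing.
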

	
\subsection{Previous results}

The study of limits of line bundles and their sections has a long history in algebraic geometry. The problem is known to be difficult, as shown by the various approaches that have been developed to describe it. Prominent ones have been limit linear series, compactified Jacobians, tropical and logarithmic divisors, and enriched structures. Each approach comes with its own advantages and limitations, and in each case there is a large body of literature. See, for example, \cite{EH86} \cite{O19}, \cite{OdaSeshadri79} \cite{Caporasocompactification}, \cite{Baker} \cite{MW22} and \cite{M98} \cite{AMIII}.

More specific to our question of an upper bound on the rank in terms of the degree on singular curves, we already mentioned the generalization of Clifford's theorem to integral curves \cite{EKS}. The closely related question of the Clifford index for singular curves has been studied in \cite{BE91} and \cite{Franciosi}. Both papers use connectivity properties of the dual graph. Furthermore, the edge-connectivity of the dual graph is known to determine positivity properties of the dualizing sheaf \cite{Catanese} \cite{CFHR}. The invariant $\lea(\GG_X^\Br)$ however seems to be new in this context. 

Special cases of Theorem~\ref{thm:clifford for uniform multidegrees} are covered by results of Franciosi and Tenni \cite{FT14} (see also \cite{Franciosi}). Namely, they show that the classic Clifford inequality holds for uniform multidegrees if $X$ has no separating nodes, and either not all global sections of the residual $\omega_X \otimes L^{-1}$ of $L$ vanish on any given irreducible component of $X$, or $X$ is $4$-connected \cite[Theorems A and 3.9]{FT14}. Without the last assumption, but still in the case with no separating nodes, they obtain a weaker bound in \cite[Theorem 3.8]{FT14}.

Caporaso's study \cite{caporasosemistable} of the situation for semistable multidegrees establishes that the classic Clifford inequality holds for semistable multidegrees in the following cases: if $X$ has $2$ irreducible components; if the total degree is $0$ or $2g - 2$; and if $X$ has no separating nodes and the total degree is at most $4$ \cite[Theorem 3.3; Theorems 4.2 and 4.4; Theorem 4.11]{caporasosemistable}. 

On the other hand, in \cite[Theorem 3.14]{Franciosi} it is claimed that every line bundle of uniform multidegree satisfies the classic Clifford inequality, and a similar claim is made in \cite[Proposition 3.1]{caporasosemistable}. Theorem~\ref{thm:main2} shows that this assertion is somewhat too optimistic, at least if $X$ contains separating nodes. See Remark~\ref{rmk:other statements clifford} for more details.

Finally, Caporaso, Len and Melo \cite{CLM} use the Baker-Norine rank \cite{BN} of the multidegrees, to show that any multidegree is equivalent via chip--firing to a multidegree for which all line bundles satisfy the classic Clifford inequality \eqref{eq:Clifford}. It remains an intriguing open problem to characterize the class of such multidegrees, or even to find explicit representatives in each chip-firing class. 

\subsection{Structure of the paper}

In Sections~\ref{sec:conventions} and \ref{sec: preliminaries} we fix notations and recall some background. In Section~\ref{sec:graph theory} we collect the graph-theoretical definitions and observations that will be needed in the statement and proof of Theorem~\ref{thm:main}. More precisely, we introduce uniform multidegrees in Subsection~\ref{subsec: uniform}, discuss the tree of $2$-edge-connected components $\GG_X^\Br$ in Subsection~\ref{subsec: tree of components}, and Dhar subgraphs in Subsection~\ref{subsec:Dhar}. In Section~\ref{sec:counterexamples} we describe counterexamples to the classic Clifford inequality in case of uniform multidegrees. Proposition~\ref{prop:vertex of valence three} establishes that the bound in Theorem~\ref{thm:main} is sharp. In Subsection~\ref{subsec:clifford inequality} we prove the Clifford inequality for uniform multidegrees, and in Subsection~\ref{subsec: generic} we establish the classic Clifford inequality for a general line bundle of fixed uniform multidegree. Finally, in Subsection~\ref{subsec:semistable multidegrees} we discuss the relationship with stable multidegrees. 
\medskip
	
\noindent 
{\bf Acknowledgements.} Many discussions with Lucia Caporaso and Ilya Tyomkin helped shape this paper, and I am very grateful for the insights and suggestions they provided. In addition, I would like to thank Matt Baker, Sam Payne, Sara Torelli and an anonymous referee, whose comments on an earlier draft led to many improvements in presentation. 

\section{Notation and Conventions} \label{sec:conventions}

Throughout the paper, we work over an algebraically closed field $k$ of characteristic $0$. We consider curves $X$ over $k$, which we will always assume to be reduced with nodal singularities. 

We denote the \emph{dual graph} of $X$ by $\GG_X$. That is, $\GG_X$ contains a vertex $v$ for every irreducible component $X_v$ of $X$; an edge between vertices $v$ and $w$ for each node in $X_v \cap X_w$, possibly with $v = w$; and each vertex $v$ is assigned the weight $g_v$ given by the geometric genus of $X_v$. In particular, $\GG_X$ may contain multiple edges between the same two vertices, as well as loop edges. We denote by $V(\GG_X)$ and $E(\GG_X)$ the sets of vertices and edges of $\GG_X$, respectively.

An edge of $\GG_X$ is a \emph{bridge} if the graph obtained by removing the edge has more connected components than $\GG_X$. A node of $X$ corresponding to a bridge of $\GG_X$ is called a \emph{separating node}.

The \emph{genus} of $\GG_X$ is defined as 
\[
g(\GG_X) = 1 - \chi(\GG_X) + \sum_{v \in V(\GG_X)} g_v,
\]
where $\chi(\GG_X)$ is the Euler characteristic of $\GG_X$. That is, $1 - \chi(\GG_X) = 1 - |V(\GG_X)| + |E(\GG_X)|$.  The genus of $\GG_X$ equals the arithmetic genus $g(X)$ of $X$. We write $g \vcentcolon= g(X) = g(\GG_X)$ if $X$ is clear from the context.

For a subcurve $Y \subset X$ we write $Y^c = \overline{X \setminus Y}$
for the closure of the complement in $X$. Any subcurve $Y \subset X$ corresponds to an \emph{induced} subgraph $\GG_Y$ of $\GG_X$, that is, one that contains all edges of $\GG_X$ between vertices contained in $\GG_Y$. The edges adjacent to $\GG_Y$ but not contained in it, correspond to the nodes in $Y \cap Y^c$. 

We denote by $\val(v)$ the \emph{valence} of $v \in \GG_X$; that is, the number of edges adjacent to $v$, with loops counted twice. We denote by $\omega_X$ the \emph{dualizing sheaf} of $X$. It has total degree $2g - 2$. The restriction of $\omega_X$ to an irreducible component $X_v$ is given as \[\left(\omega_X\right)|_{X_v} \simeq \omega_{X_v}\left(X_v \cap X_v^c\right),\] where $\omega_{X_v}$ is the dualizing sheaf of $X_v$. In particular, $\omega_X$ has degree $2g_v - 2 + \val(v)$ on $X_v$.

A curve $X$ is \emph{semistable}, if $\val(v) \geq 2$ whenever $X_v$ is rational. It is \emph{stable} if $\val(v) \geq 3$, whenever $X_v$ is rational and $\val(v) \geq 1$ whenever $g_v = 1$. Equivalently, $X$ is semistable (stable) if $\omega_X$ has non-negative (positive) degree on each irreducible component of $X$.  

We write $\md$ for a \emph{multidegree}, that is, a formal linear combination of vertices of $\GG_X$ with integer coefficients. We denote by $\md_v$ the coefficient at a vertex $v$ of $\GG_X$. The multidegree $\mdeg(L)$ of a line bundle $L$ is defined to have value $\deg(L|_{X_v})$ on $v$. The \emph{total degree} of $\md$ is $\sum_v \md_v$, which coincides with the total degree $\deg(L)$ of $L$ if $\md$ is the multidegree $\mdeg(L)$ of $L$. 

The \emph{Picard scheme} of $X$ is denoted by $\Pic(X)$. The sublocus parametrizing line bundles of degree $d$ is denoted by $\Pic^d(X)$. The connected components of $\Pic^d(X)$ are denoted by $\Pic^{\md}(X)$, one for each multidegree $\md$ of total degree $d$. $\Pic^{\md}(X)$ parametrizes line bundles of multidegree $\md$ and any two such connected components are isomorphic. If $X$ is irreducible, there is a unique connected component of $\Pic^d(X)$; otherwise, there are infinitely many. 

Clearly, $\Pic^{\md}(X) \simeq \Pic^{\md_1}(X_1) \times  \Pic^{\md_2}(X_2)$ if $X_1$ and $X_2$ are two connected components of $X$ and $\md_i$ denotes the restriction of $\md$ to the corresponding connected component. 
For a connected curve $X$ we have the following short exact sequence \begin{equation} \label{eq:ses_pic}
    1 \rightarrow (k^*)^{1 - \chi(\GG_X)} \rightarrow \Pic^{\md}(X) \rightarrow \Pic^{\md} \left (X^\nu \right ) \rightarrow 0,
\end{equation}
where $X^\nu$ denotes the normalization of $X$. The second map is given by pulling back the line bundle. For the definition of the first map see for example \cite[Theorem 2.3]{Alexeev}. Roughly speaking, a line bundle on $X$ is specified by its pull-back to the normalization together with gluing data over the nodes.

Given a node $p \in X$ we still have a surjective morphism $\Pic^{\md}(X) \to \Pic^{\md}(X^\nu_p)$ given by pull-back, where $X_p^\nu$ denotes the partial normalization of $X$ at $p$. It is an isomorphism if $p$ is a separating node and otherwise has fiber $k^*$. In particular, if $p$ is a separating node, then $\Pic(X) \simeq \Pic(X_1) \times \Pic(X_2)$, where the $X_i$ denote the connected components of $X_p^\nu$.

\section{Global sections of line bundles on nodal curves} \label{sec: preliminaries}

In this section, we continue the preliminaries and collect some well-known results about the dimension $h^0(X,L)$ of the space of global sections of a line bundle $L$ on $X$. 

\medskip

We begin with the Clifford Theorem in case $X$ is irreducible. We cite only the parts that are used later and the statement found in \cite[Theorem A, p. 533]{EKS} is stronger. 

\begin{thm}[Clifford inequality] \label{thm:clifford irreducible}
Suppose $X$ is an irreducible nodal curve and $L$ a line bundle of degree $0 \leq d \leq 2g - 2$. Then \[h^0(X,L) \leq \frac{d}{2} + 1.\] Furthermore, there is a dense open subset of $\Pic^d(X)$ in which the inequality is strict.
\end{thm}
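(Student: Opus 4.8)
The statement combines the Clifford inequality itself (which is quoted from \cite{EKS}) with the genericity of strict inequality; the plan is to reconstruct the classical argument for the former and give a direct dimension count for the latter.

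For the inequality, set $M := \omega_X \otimes L^{-1}$, whose total degree $2g-2-d$ again lies in $[0,2g-2]$; recall that $X$ integral gives $h^0(X,\mathcal O_X)=1$ and $h^0(X,\omega_X)=g$, and that Serre duality turns Riemann--Roch into $h^0(X,L)-h^0(X,M)=d-g+1$. The degenerate cases are immediate from Riemann--Roch and $d\le 2g-2$: if $h^0(X,L)\le 1$ there is nothing to prove; if $h^0(X,M)=0$ then $h^0(X,L)=d-g+1\le \frac{d}{2}+1$; if $h^0(X,M)=1$ then $h^0(X,L)=d-g+2\le\frac{d}{2}+1$. So assume $h^0(X,L),h^0(X,M)\ge 2$, in which case it suffices to prove $h^0(X,L)+h^0(X,M)\le g+1$, since adding this to Riemann--Roch gives $2h^0(X,L)\le d+2$. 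For this I would use the multiplication map $H^0(X,L)\otimes H^0(X,M)\to H^0(X,\omega_X)$: for nonzero $s\in H^0(X,L)$, $t\in H^0(X,M)$, multiplication by $s$ and by $t$ is injective (as $X$ is integral), so $A:=s\cdot H^0(X,M)$ and $B:=t\cdot H^0(X,L)$ are subspaces of $H^0(X,\omega_X)$ of dimensions $h^0(X,M)$ and $h^0(X,L)$, both containing $st\ne 0$; hence
\[
h^0(X,L)+h^0(X,M)=\dim(A+B)+\dim(A\cap B)\le g+\dim(A\cap B),
\]
and everything comes down to choosing $s,t$ with $A\cap B=k\cdot st$.

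Here I would exploit that a nonzero section of a line bundle on the \emph{integral} curve $X$ is a non-zero-divisor even at the nodes, so its zero scheme $\dv(s)$ is an honest effective Cartier divisor and $\dv(su)=\dv(s)+\dv(u)$. Consequently, if $\dv(s)$ and $\dv(t)$ have disjoint support, any relation $su=tv$ forces $\dv(t)\le\dv(u)$, whence $u/t\in H^0(X,\mathcal O_X)=k$ and $A\cap B=k\cdot st$. To produce such $s,t$: writing $|L|=\Delta+|L(-\Delta)|$ with $\Delta$ the (effective Cartier) fixed part, we may replace $L$ by $L(-\Delta)$ --- this keeps $h^0$ and only lowers the degree --- so by induction on $d$ (base case $d=0$ trivial) we may assume $|L|$ base-point-free; then for any fixed nonzero $t\in H^0(X,M)$, the finitely many points of $\supp\dv(t)$ impose proper conditions on $H^0(X,L)$, and since $h^0(X,L)\ge 2$ a general $s\in H^0(X,L)$ avoids them all, giving $\dv(s)\cap\dv(t)=\emptyset$.

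For the genericity statement, I would show that a general $L\in\Pic^d(X)$ satisfies $h^0(X,L)=\max(0,d-g+1)$ --- which holds on a dense open, since $h^0$ is upper semicontinuous and bounded below by $\max(0,d-g+1)$ via Riemann--Roch, once one such $L$ is exhibited --- and then observe $\max(0,d-g+1)<\frac{d}{2}+1$ for all $0\le d\le 2g-2$. Via Serre duality (applied to $\omega_X\otimes L^{-1}$ when $d\ge g-1$, and to $L$ itself when $d\le g-2$) this reduces to the statement that a general line bundle of degree $e$, with $0\le e\le g-1$, has no sections. That in turn holds because every such bundle with a section is of the form $\mathcal O_X(D)$ for an effective Cartier divisor $D$ of degree $e$, and the family of such $D$ has dimension $e\le g-1<g=\dim\Pic^e(X)$ (the punctual contributions at the nodes being of strictly smaller dimension), so $\{N\in\Pic^e(X):h^0(X,N)\ge 1\}$ is a proper closed subset.

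The main obstacle I expect is not the architecture of the proof --- it is the standard Clifford argument --- but the bookkeeping at the nodes: checking that divisors of sections are Cartier and add correctly, that the fixed part of a complete linear system on a nodal curve is again a Cartier divisor (so that the inductive reduction to the base-point-free case is legitimate), and that punctual effective Cartier divisors at a node form a low-dimensional family. These nodal technicalities are presumably why \cite{EKS} handled the irreducible case in a separate appendix, and why the result is only quoted here.
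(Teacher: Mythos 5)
First, note that the paper does not prove this statement at all: it is quoted from the appendix with Harris in \cite{EKS} (``the statement found in [Theorem A, p.~533] is stronger''), so there is no in-paper argument to compare against. Your reconstruction of the classical Clifford argument is architecturally sound in its easy cases and in the linear algebra ($\dim(A+B)+\dim(A\cap B)=\dim A+\dim B$ inside $H^0(X,\omega_X)$, plus the local observation that $su=tv$ with $s$ a unit near $\operatorname{supp}(\operatorname{div} t)$ forces $u/t\in H^0(X,\mathcal O_X)=k$), and your genericity argument via non-specialty of a general line bundle is a legitimate independent route. But the step you dismiss as ``bookkeeping'' --- the reduction to a base-point-free $|L|$ --- is a genuine gap, and it is exactly the point where the smooth-curve proof breaks on nodal curves.

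The problem is that on a nodal curve the scheme-theoretic base locus of a complete linear series need not be a Cartier divisor, and hence cannot be removed by twisting. At a node the local ring is $k[[x,y]]/(xy)$, and the base ideal of $|L|$ there can be the maximal ideal $(x,y)$, which is not invertible (it is not generated by a single non-zero-divisor); the largest common effective \emph{Cartier} subdivisor of all the $\operatorname{div}(s)$ is then $0$, so $L(-\Delta)$ still has every section vanishing at that node. Consequently your key existence claim --- for fixed $t$, a general $s\in H^0(X,L)$ has $\operatorname{supp}(\operatorname{div} s)\cap\operatorname{supp}(\operatorname{div} t)=\emptyset$ --- fails whenever $\operatorname{div}(t)$ passes through such an irremovable base point, and the bound $\dim(A\cap B)=1$ is not established. (Relatedly, the alternative ``addition of linear series'' route also degenerates at nodes: e.g.\ $x^2+y^2=(ax+by)\cdot(a^{-1}x+b^{-1}y)$ in $k[[x,y]]/(xy)$ exhibits a positive-dimensional family of decompositions of a single Cartier divisor.) This is precisely the difficulty that forces the appendix of \cite{EKS} to run a different induction --- subtracting general \emph{smooth} points one at a time and using the Riemann--Roch base-point dichotomy (in the spirit of Corollary~\ref{cor:RR applied1}) to reduce to a pencil --- rather than normalizing the linear series to be base-point-free. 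To complete your proof you would need either to supply that argument or to handle the case of a non-Cartier base locus at a node directly.
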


Next, recall that the Riemann-Roch Theorem and Serre duality still hold for nodal curves. See for example \cite[pp. 90 - 91]{curvesv2}.

\begin{thm}[Riemann-Roch]\label{thm:Riemann Roch}
Let $X$ be a nodal curve of genus $g$ and $L$ a line bundle on $X$ of total degree $d$. Then \[h^0(X, L) - h^0\left(X, \omega_X \otimes L^{-1}\right) = d - g + 1.\]
\end{thm}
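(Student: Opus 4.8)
The plan is to deduce the statement from two standard ingredients: the numerical (Euler characteristic) form of Riemann--Roch, namely $\chi(X,L) := h^0(X,L) - h^1(X,L) = d + 1 - g$, together with Serre duality in the form $H^1(X,L) \cong H^0(X, \omega_X \otimes L^{-1})^\vee$, so that $h^1(X,L) = h^0(X, \omega_X \otimes L^{-1})$. Combining the two yields the asserted equality, once one notes that $g = g(X)$ is by definition the arithmetic genus and hence $\chi(X, \mathcal{O}_X) = 1 - g$.

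For the numerical Riemann--Roch, I would argue by reduction to the structure sheaf. Since $X$ is reduced with nodal singularities, its smooth locus $X^{\mathrm{sm}}$ is dense, and for any smooth point $p$ there is a short exact sequence
\[ 0 \to L(-p) \to L \to k_p \to 0, \]
with $k_p$ the skyscraper sheaf at $p$; additivity of $\chi$ on short exact sequences gives $\chi(X,L) = \chi(X, L(-p)) + 1$, and likewise $\chi(X, L(p)) = \chi(X,L) + 1$. Every line bundle on $X$ can be written as $\mathcal{O}_X(D_1 - D_2)$ with $D_1, D_2$ effective Cartier divisors supported on $X^{\mathrm{sm}}$ (twist by a high multiple of an ample divisor to become globally generated, then pick a section vanishing to order one at smooth points only), so iterating the two relations reduces the computation of $\chi(X,L)$ to that of $\chi(X, \mathcal{O}_X)$, corrected by $\deg(D_1) - \deg(D_2) = d$. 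Alternatively, and perhaps more in keeping with the techniques of this paper, one may induct on the number of nodes: if $\nu \colon X^\nu \to X$ is the partial normalization at one node, the conductor sequences $0 \to \mathcal{O}_X \to \nu_* \mathcal{O}_{X^\nu} \to k \to 0$ and $0 \to L \to \nu_* \nu^* L \to k \to 0$, together with the projection formula, reduce the claim to $X^\nu$ and ultimately to the classical Riemann--Roch on smooth curves; the bookkeeping of the genus then matches the formula $g = 1 - \chi(\GG_X) + \sum_v g_v$, the $\sum_v g_v$ coming from the smooth components and the remaining correction from the nodes.

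For Serre duality, I would invoke that a projective nodal curve is Cohen--Macaulay of pure dimension one and hence admits a dualizing sheaf $\omega_X$ satisfying the above duality isomorphism for every line bundle $L$. Concretely, $\omega_X$ is the sheaf described in Section~\ref{sec:conventions} (meromorphic differentials on $X^\nu$ with at worst simple poles along the preimages of the nodes and opposite residues there), and one can verify the duality directly via the normalization, compatibly with the conductor sequences used above.

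The routine steps — additivity of $\chi$, the projection formula, the explicit conductor sequences — are entirely standard; the one genuinely substantive point is Serre duality together with the identification of the dualizing sheaf, which is precisely where the nodality of the singularities enters, and where I would lean on the cited references (\cite[pp.~90--91]{curvesv2}) rather than reprove it from scratch.
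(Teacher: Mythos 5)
The paper does not actually prove this statement: it is quoted as classical background, with a pointer to \cite[pp.~90--91]{curvesv2}, so there is no in-paper argument to compare against. Your outline is the standard proof of exactly the result being cited --- additivity of $\chi$ along skyscraper sequences at smooth points to reduce the Euler-characteristic form $\chi(X,L)=d+1-g$ to $\chi(X,\mathcal{O}_X)=1-g$, combined with Serre duality $h^1(X,L)=h^0(X,\omega_X\otimes L^{-1})$ for the Cohen--Macaulay curve $X$ --- and both routes you sketch (moving points on the smooth locus, or inducting on nodes via the conductor sequence and the projection formula) are sound; deferring the existence of the dualizing sheaf and the duality isomorphism to the reference is exactly what the paper itself does.
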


\begin{cor}\label{cor:RR applied1}
Let $X$ be a nodal curve, $p \in X$ a smooth point and $L$ a line bundle on $X$. Then $p$ is a base point of $L$ if and only if it is not a base point of $(\omega_X \otimes L^{-1})(p)$. 
\end{cor}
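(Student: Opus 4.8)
The plan is to reduce everything to a statement about $h^0$ via the standard characterization of base points, and then apply Riemann--Roch (Theorem~\ref{thm:Riemann Roch}) twice. Recall that for a smooth point $p$ of a nodal curve $X$ and a line bundle $M$ on $X$, the point $p$ is a base point of $M$ precisely when $h^0(X, M(-p)) = h^0(X, M)$; equivalently, $p$ is \emph{not} a base point iff $h^0(X, M(-p)) = h^0(X,M) - 1$. Since $p$ is a smooth point, twisting by $\pm p$ is well-defined and changes the total degree by $\pm 1$. I will apply this with $M = L$ to detect whether $p$ is a base point of $L$, and with $M = (\omega_X \otimes L^{-1})(p)$ — noting $M(-p) = \omega_X \otimes L^{-1}$ — to detect whether $p$ is a base point of the residual twist.

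First I would write, for $M = L$ of total degree $d$: by Riemann--Roch, $h^0(X,L) - h^0(X, \omega_X \otimes L^{-1}) = d - g + 1$, and $h^0(X, L(-p)) - h^0(X, \omega_X \otimes L^{-1}(p)) = (d-1) - g + 1$. Subtracting these two identities gives
\[
\bigl(h^0(X,L) - h^0(X,L(-p))\bigr) + \bigl(h^0(X, (\omega_X\otimes L^{-1})(p)) - h^0(X, \omega_X \otimes L^{-1})\bigr) = 1.
\]
Each of the two bracketed differences is either $0$ or $1$ (twisting a line bundle by a single smooth point changes $h^0$ by at most $1$, and cannot increase it when passing from $M$ to $M(-p)$, nor decrease it when passing from $M(-p)$ to $M$). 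Hence exactly one of the two brackets equals $1$ and the other equals $0$. The first bracket is $1$ exactly when $p$ is \emph{not} a base point of $L$; the second bracket is $1$ exactly when $p$ \emph{is} not a base point of $(\omega_X \otimes L^{-1})(p)$ — wait, I need to recheck the direction here: the second bracket is $h^0((\omega_X\otimes L^{-1})(p)) - h^0((\omega_X\otimes L^{-1})(p)(-p))$, which is $1$ iff $p$ is not a base point of $(\omega_X \otimes L^{-1})(p)$. So the equation says: $p$ is not a base point of $L$ iff $p$ is a base point of $(\omega_X\otimes L^{-1})(p)$, which is the contrapositive of the claimed equivalence, hence the claim.

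The only genuinely non-formal input is the elementary fact that for a line bundle $M$ and a smooth point $p$, one has $h^0(X,M) - 1 \le h^0(X, M(-p)) \le h^0(X,M)$; I expect this to be the main (and only real) obstacle, and it follows from the exact sequence $0 \to M(-p) \to M \to M|_p \to 0$ together with $\dim M|_p = 1$. Everything else is bookkeeping with Riemann--Roch. I should also make sure the twist $(\omega_X \otimes L^{-1})(p)$ makes sense, which it does since $p$ is smooth, and that $\omega_X$ has total degree $2g-2$ so no special-range hypothesis is needed — the corollary is purely an exactness statement, not a numerical one.
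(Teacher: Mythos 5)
Your argument is correct and is essentially the paper's own proof: both apply Riemann--Roch to $L$ and to $L(-p)$, subtract, and translate the resulting identity into the base-point statement via the standard fact that $h^0$ drops by $0$ or $1$ when twisting down by a smooth point. Your version just makes the ``each difference is $0$ or $1$, so exactly one equals $1$'' step explicit, which the paper leaves implicit.
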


\begin{proof}
Applying the Riemann-Roch Theorem gives on the one hand \[h^0\left(X,L\right) - h^0\left(X, \omega_{X} \otimes L^{-1}\right) = d - g + 1\] and on the other \[h^0\left(X,L(-p)\right) - h^0\left(X, (\omega_{X} \otimes L^{-1})(p)\right) = d - 1 - g + 1.\]
Hence $h^0(X, L) = h^0(X, L(-p))$ if and only if $h^0(X, \omega_{X} \otimes L^{-1}) = h^0\left(X, (\omega_{X} \otimes L^{-1})\right)(p) - 1$.
\end{proof}

\begin{cor}\label{cor:RR applied2}
Let $X$ be a nodal curve, $p \in X$ a smooth point. Then $p$ is a base point of $\omega_X(p)$.
\end{cor}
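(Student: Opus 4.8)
The plan is to apply Corollary~\ref{cor:RR applied1} directly with the line bundle $L = \omega_X(p)$. Since $p$ is a smooth point, $\mathcal{O}_X(p)$ and $\mathcal{O}_X(-p)$ are genuine line bundles, and the residual of $L$ is
\[
\omega_X \otimes L^{-1} = \omega_X \otimes \left(\omega_X(p)\right)^{-1} = \mathcal{O}_X(-p),
\]
so that $(\omega_X \otimes L^{-1})(p) = \mathcal{O}_X(-p)(p) = \mathcal{O}_X$. Thus Corollary~\ref{cor:RR applied1} says that $p$ is a base point of $\omega_X(p)$ if and only if $p$ is \emph{not} a base point of $\mathcal{O}_X$.

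It then remains to observe that $p$ is not a base point of $\mathcal{O}_X$: the constant function $1$ is a global section of $\mathcal{O}_X$ that does not vanish at $p$ (indeed, nowhere), so not all global sections of $\mathcal{O}_X$ vanish at $p$. Combining this with the previous equivalence yields that $p$ is a base point of $\omega_X(p)$, as claimed.

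There is essentially no obstacle here; the only point requiring a moment's care is bookkeeping with the twists, namely that $\omega_X \otimes (\omega_X(p))^{-1}(p)$ collapses to $\mathcal{O}_X$, and the (harmless) observation that "$p$ not a base point of $\mathcal{O}_X$" is witnessed by the constant section, which holds regardless of whether $X$ is connected.
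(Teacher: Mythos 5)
Your proof is correct and amounts to the same Riemann--Roch computation as the paper's: the paper directly observes that Riemann--Roch gives $h^0(X,\omega_X)=h^0(X,\omega_X(p))$, while you route the identical information through Corollary~\ref{cor:RR applied1} applied to $L=\omega_X(p)$, reducing to the observation that $\mathcal{O}_X$ has no base points. Both are immediate; your version has the minor virtue of making explicit that connectedness of $X$ plays no role.
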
	

\begin{proof}
The Riemann-Roch Theorem gives $h^0\left(X, \omega_X\right) = h^0\left(X, \omega_X(p)\right) = g$.
\end{proof}

As a last immediate consequence of the Riemann-Roch Theorem, we observe that any Clifford type inequality we want to prove holds for $L$ if and only if it holds for its residual $\omega_X \otimes L^{-1}$.

\begin{cor} \label{cor: clifford residual}
Let $X$ be a semistable curve, $L$ a line bundle of total degree $d$ on $X$ and $l \in \RR$ any constant. Then \[h^0(X,L) \leq \frac{d}{2} + l \Leftrightarrow h^0(X,\omega_X \otimes L^{-1}) \leq \frac{2g - 2 - d}{2} + l.\]
\end{cor}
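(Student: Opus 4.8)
The statement to prove is Corollary~\ref{cor: clifford residual}:

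\begin{proof}[Proof proposal]
The plan is to apply the Riemann--Roch Theorem (Theorem~\ref{thm:Riemann Roch}) directly and rearrange. Write $M = \omega_X \otimes L^{-1}$ for the residual of $L$; since $X$ is semistable, $\omega_X$ has total degree $2g-2$, so $M$ has total degree $2g - 2 - d$. Riemann--Roch applied to $L$ gives
\[
h^0(X, L) - h^0(X, M) = d - g + 1,
\]
which we rewrite as $h^0(X,L) = h^0(X,M) + d - g + 1$. First I would substitute this expression into the left-hand inequality $h^0(X,L) \leq \tfrac{d}{2} + l$, obtaining $h^0(X,M) + d - g + 1 \leq \tfrac{d}{2} + l$, i.e. $h^0(X,M) \leq \tfrac{d}{2} + l - d + g - 1 = \tfrac{2g - 2 - d}{2} + l$, which is precisely the right-hand inequality. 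Conversely, the same substitution run backwards shows the right-hand inequality implies the left-hand one, so the two are equivalent. There is no real obstacle here: the only things being used are that Riemann--Roch holds for nodal curves (already cited) and that $\deg \omega_X = 2g - 2$ (recalled in Section~\ref{sec:conventions}); the proof is a one-line algebraic manipulation of the Riemann--Roch identity. For completeness one notes that $M$ is a genuine line bundle on $X$ whenever $L$ is, so $h^0(X, M)$ is defined and the statement is symmetric in $L$ and its residual.
\end{proof}
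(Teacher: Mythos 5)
Your proof is correct and follows exactly the paper's argument: apply Riemann--Roch to $L$ to get $h^0(X,L) = h^0(X,\omega_X \otimes L^{-1}) + d - g + 1$ and rearrange the inequality, noting $\tfrac{d}{2} + l - d + g - 1 = \tfrac{2g-2-d}{2} + l$. Nothing is missing.
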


\begin{proof}
    By the Riemann-Roch Theorem  we have
    \begin{eqnarray*}
    & & h^0(X,L) \leq \frac{d}{2} + l \\
    &\Leftrightarrow& h^0(X,\omega_X \otimes L^{-1}) + d - g + 1 \leq \frac{d}{2} + l\\
    &\Leftrightarrow& h^0(X,\omega_X \otimes L^{-1})\leq \frac{2g - 2 - d}{2} + l.
    \end{eqnarray*}
\end{proof}

\medskip

Finally, we give two lemmas that help to calculate $h^0(X,L)$ inductively.
\begin{defn} \label{def: neutral pair}
   A \emph{neutral pair} of $L$ is a pair of smooth points $p_1,p_2$ on $X$, such that \[h^0\left(X, L(-p_1)\right) = h^0\left(X, L(-p_2)\right) = h^0\left(X, L(-p_1 - p_2)\right).\]
\end{defn} 
Notice that when $p_1$ and $p_2$ are contained in different connected components of $X$, they are a neutral pair if and only if they are both base points of $L$.

Next, denote by $\nu: X^\nu \to X$ the partial normalization of $X$ at a single node $p$. We have for any line bundle $L$ on $X$ \[h^0\left(X^\nu, \nu^* L\right) - 1 \leq h^0\left(X,L\right) \leq h^0\left(X^\nu, \nu^* L\right).\] The following lemma describes the possible cases in detail, see \cite[Lemma 1.4]{caporasosemistable}.

\begin{lma} \label{lma:neutral pairs}
Let $p$ be a node of a nodal curve $X$. Let $\nu: X^\nu \to X$ be the partial normalization of $X$ at $p$ and $p_1, p_2$ the two preimages of $p$. Let $L^\nu$ be a line bundle on $X^\nu$ with $h^0(X^\nu, L^\nu) \neq 0$.

Then there exists $L$ on $X$ with $L^\nu = \nu^* L$ and $h^0(X^\nu, L^\nu) = h^0(X, L)$ if and only if $p_1$ and $p_2$ are a neutral pair of $L^\nu$. If $p_1$ and $p_2$ are not base points, there is at most one such $L$. 
\end{lma}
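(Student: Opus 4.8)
The plan is to express both directions of the equivalence through the two evaluation functionals at $p_1$ and $p_2$ on the space $V \vcentcolon= H^0(X^\nu, L^\nu)$, and to observe that each side says the same thing: these functionals have the same kernel. Recall first (cf.\ the discussion of $\Pic(X)\to\Pic(X^\nu)$ in Section~\ref{sec:conventions}) that every line bundle $L$ on $X$ with $\nu^*L\cong L^\nu$ is, up to isomorphism, of the form $L_\phi$ for a gluing isomorphism $\phi\colon (L^\nu)_{p_1}\xrightarrow{\ \sim\ }(L^\nu)_{p_2}$, and that $L_\phi$ sits in a short exact sequence of sheaves on $X$
\[
 0 \longrightarrow L_\phi \longrightarrow \nu_*(L^\nu) \longrightarrow k_p \longrightarrow 0,
\]
whose last map, in a local trivialization, is $s\mapsto \phi(s(p_1)) - s(p_2)$. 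Since $\nu$ is finite, $H^i(X,\nu_*L^\nu)=H^i(X^\nu,L^\nu)$, so the long exact cohomology sequence identifies $H^0(X,L_\phi)$ with $\ker\partial_\phi$, where
\[
 \partial_\phi\colon V \longrightarrow (L^\nu)_{p_2},\qquad s\longmapsto \phi(e_1(s)) - e_2(s),
\]
and $e_i\colon V\to(L^\nu)_{p_i}$ denotes evaluation at $p_i$. As $(L^\nu)_{p_2}$ is one-dimensional, $\partial_\phi$ has rank $0$ or $1$, whence $h^0(X^\nu,L^\nu)-1\le h^0(X,L_\phi)\le h^0(X^\nu,L^\nu)$, with equality on the right if and only if $\partial_\phi=0$, i.e.\ $e_2=\phi\circ e_1$.

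Next I would reinterpret the neutral-pair condition. Because $p_1,p_2$ are smooth points of $X^\nu$, we have $H^0(X^\nu,L^\nu(-p_i))=\ker e_i$ and $H^0(X^\nu,L^\nu(-p_1-p_2))=\ker e_1\cap\ker e_2$. Comparing dimensions in the three cases according to how many of $e_1,e_2$ are zero shows that $(p_1,p_2)$ is a neutral pair of $L^\nu$ if and only if $\ker e_1=\ker e_2$ as subspaces of $V$. The degenerate instance $\ker e_1=\ker e_2=V$ is exactly the case that $p_1$ and $p_2$ are both base points of $L^\nu$; when $p$ is separating, the equivalence recovers the remark after Definition~\ref{def: neutral pair}.

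It remains to match the two conditions. Suppose $\ker e_1=\ker e_2=:K$. If $K=V$ then $e_2=\phi\circ e_1=0$ for every $\phi$; if $K$ has codimension one, then $e_1$ and $e_2$ induce isomorphisms $\bar e_i\colon V/K\xrightarrow{\ \sim\ }(L^\nu)_{p_i}$, and $\phi\vcentcolon=\bar e_2\circ\bar e_1^{-1}$ is the unique gluing with $e_2=\phi\circ e_1$. In either case the resulting $L_\phi$ satisfies $h^0(X,L_\phi)=h^0(X^\nu,L^\nu)$. Conversely, if $e_2=\phi\circ e_1$ for some $\phi$, then $\ker e_1=\ker e_2$ because $\phi$ is injective. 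This proves the equivalence. For the final clause, note that the codimension-one case above is precisely the case that neither $p_i$ is a base point, and there $\phi$ — hence $L$ — is forced, while the only situation producing more than one such $L$ (up to isomorphism) is $K=V$, i.e.\ both $p_i$ base points. I expect the only delicate point to be the bookkeeping with trivializations in the identification of $\partial_\phi$, together with keeping "exists" and "at most one" consistently up to isomorphism — so that automorphisms of $L^\nu$, which act transitively on the set of gluings exactly when $p_1$ and $p_2$ lie in different connected components of $X^\nu$, are correctly accounted for. The actual content is the elementary remark that a neutral pair is a coincidence of kernels of two linear functionals.
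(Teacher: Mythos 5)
Your argument is correct. Note, however, that the paper does not prove this lemma at all: it is quoted from Caporaso, \emph{Linear series on semistable curves}, Lemma~1.4 (the sentence preceding the statement says ``see \cite[Lemma 1.4]{caporasosemistable}''), so there is no internal proof to compare against. Your proof is the standard one and is complete: the identification $H^0(X,L_\phi)=\ker\partial_\phi$ via the exact sequence $0\to L_\phi\to\nu_*L^\nu\to k_p\to 0$ is right; the translation of the neutral-pair condition into $\ker e_1=\ker e_2$ is right (the inclusion $\ker e_1\cap\ker e_2\subseteq\ker e_i$ forces equality of the subspaces once the three dimensions agree, and conversely); and the dichotomy $K=V$ versus $\codim K=1$ correctly matches the base-point versus non-base-point cases, with the gluing $\phi=\bar e_2\circ\bar e_1^{-1}$ uniquely determined in the latter. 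You are also right to flag the automorphism bookkeeping for the ``at most one $L$'' clause: when $p$ is non-separating the gluings inject into isomorphism classes (an automorphism of $L^\nu$ rescales both fibers by the same unit), and when $p$ is separating the fiber of $\Pic(X)\to\Pic(X^\nu)$ is a single point anyway, so uniqueness holds in either case. The hypothesis $h^0(X^\nu,L^\nu)\neq 0$ plays no role in your argument beyond making the base-point terminology non-vacuous, which is consistent with how the lemma is used in the paper.
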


Recall that we denote by $Y^c = \overline{X \setminus Y}$ the closure of the complement of a subcurve $Y$. In particular, $Y^c \cap Y$ is a finite union of nodes of $X$, that are smooth points both on $Y$ and $Y^c$.

\begin{lma}\label{lma:subcurves}
Let $X$ be a nodal curve, $L$ a line bundle on $X$ and $Y \subsetneq X$ a proper subcurve. Then \[h^0(X,L) \leq h^0\left(Y, L|_Y\right) + h^0\left(Y^c, L|_{Y^c}\left(- Y \cap Y^c\right)\right).\] 
If all global sections of $L$ vanish along $Y$, then $h^0(X,L) = h^0\left(Y^c, L|_{Y^c}\left(- Y \cap Y^c\right)\right)$.
\end{lma}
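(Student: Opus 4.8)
The plan is to exploit the short exact sequence of sheaves associated to the decomposition $X = Y \cup Y^c$. Since $Y \cap Y^c$ consists of finitely many nodes, each of which is a smooth point of both $Y$ and $Y^c$, restriction of sections of $L$ to $Y$ is well-defined, and a section of $L$ whose restriction to $Y$ vanishes must in particular vanish at the points $Y \cap Y^c$ when viewed on $Y^c$. Concretely, I would write down the sequence
\[
0 \longrightarrow L|_{Y^c}\bigl(-Y \cap Y^c\bigr) \longrightarrow L \longrightarrow L|_Y \longrightarrow 0,
\]
where the first sheaf is pushed forward from $Y^c$ and the last from $Y$, and the injectivity on the left uses that $L|_{Y^c}(-Y\cap Y^c)$ is the ideal sheaf of $Y$ inside $X$ twisted by $L$ (one checks this locally: away from $Y \cap Y^c$ it is clear, and at a node $q \in Y \cap Y^c$ the two branches meet transversally so the ideal of the branch $Y$ is exactly the functions vanishing at $q$ along $Y^c$). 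Taking the long exact sequence in cohomology gives
\[
0 \longrightarrow H^0\bigl(Y^c, L|_{Y^c}(-Y\cap Y^c)\bigr) \longrightarrow H^0(X, L) \longrightarrow H^0(Y, L|_Y),
\]
and the desired inequality $h^0(X,L) \leq h^0(Y, L|_Y) + h^0(Y^c, L|_{Y^c}(-Y\cap Y^c))$ follows immediately from exactness (the last map need not be surjective, which is exactly why one only gets an inequality).

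For the second assertion: if every global section of $L$ vanishes along $Y$, then the restriction map $H^0(X,L) \to H^0(Y, L|_Y)$ is the zero map, so by exactness $H^0(Y^c, L|_{Y^c}(-Y\cap Y^c)) \to H^0(X,L)$ is an isomorphism, giving the stated equality. I should double-check the edge cases where $Y$ or $Y^c$ is disconnected or where $Y \cap Y^c$ is empty (i.e. $Y$ is a union of connected components of $X$); in the latter case the twist is trivial and the sequence just expresses $H^0(X,L) = H^0(Y,L|_Y) \oplus H^0(Y^c, L|_{Y^c})$, consistent with the claim.

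The only genuinely delicate point is the local identification at the nodes in $Y \cap Y^c$ — verifying that the kernel of $L \to L|_Y$ is precisely $L|_{Y^c}(-Y \cap Y^c)$ rather than $L|_{Y^c}$, i.e. that sections of $L$ supported on $Y^c$ automatically acquire a zero at each node where $Y^c$ meets $Y$. This is a standard computation in the local ring $k[x,y]/(xy)$ of a node, where the ideal of one branch is $(y)$ and the structure sheaf of that branch modulo it sees a drop corresponding to the value at the node; I would state it as a one-line local verification rather than belabor it. Everything else is formal cohomology of a short exact sequence on a projective curve, where $H^0$ is finite-dimensional so the inequality of dimensions is automatic. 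Alternatively, one can avoid sheaf sequences entirely and argue directly: a section $s \in H^0(X,L)$ restricts to $s|_Y \in H^0(Y, L|_Y)$, and the kernel of $s \mapsto s|_Y$ is the space of sections vanishing on $Y$, which (since they vanish at $Y \cap Y^c$) embeds into $H^0(Y^c, L|_{Y^c}(-Y\cap Y^c))$; this gives the inequality by rank-nullity and makes the second statement transparent.
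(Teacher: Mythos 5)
Your argument is correct and is essentially the paper's: the paper simply observes that the kernel of the restriction map $H^0(X,L)\to H^0(Y,L|_Y)$ is naturally identified with $H^0\left(Y^c, L|_{Y^c}(-Y\cap Y^c)\right)$, which is exactly the content of your short exact sequence (and of the direct alternative you sketch at the end). The only point to watch in the direct version is that this kernel is not merely embedded in but equal to $H^0\left(Y^c, L|_{Y^c}(-Y\cap Y^c)\right)$ --- every such section extends by zero across $Y$ --- since the second assertion needs the full identification; your sheaf-sequence formulation already supplies this via left-exactness.
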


\begin{proof}
The restriction map $H^0(X,L) \to H^0(Y, L|_Y)$ is linear with kernel the global sections of $L$, that vanish along $Y$. Hence we may naturally identify this kernel with $H^0\left(Y^c, L|_{Y^c}\left(- Y \cap Y^c\right)\right)$ and both claims follow immediately. 
\end{proof}

\section{Graph theoretic notions} \label{sec:graph theory}

In this section, we introduce three notions of a combinatorial flavour: uniform multidegrees, the tree/forest of $2$-edge connected components and Dhar subgraphs. In each case, we collect some observations that will be used in the proof of Theorem~\ref{thm:clifford for uniform multidegrees}.

\subsection{Uniform multidegrees}\label{subsec: uniform}
Recall that $\val(v)$ denotes the number of edges adjacent to a vertex $v$ in $\GG_X$, with loops counted twice, and that the dualizing sheaf $\omega_X$ has degree $2g_v - 2 + \val(v)$ on the irreducible component $X_v$. 

\begin{defn} \label{def:uniform}
Let $X$ be a semistable curve and $L$ a line bundle on $X$ of multidegree $\md$. We call the multidegree $\md$ \emph{uniform} if it satisfies on each irreducible component $X_v$ of $X$: \[0 \leq \md_v \leq 2 g_v - 2 + \val(v) = \deg\left(\omega_X|_{X_v}\right).\]
\end{defn}

\tikzset{every picture/.style={line width=0.75pt}}
    
    \begin{figure}[ht]
    \begin{tikzpicture}[x=0.8pt,y=0.8pt,yscale=-0.8,xscale=0.8]
    \import{./}{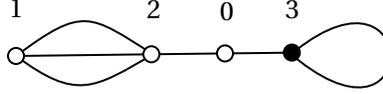}
    \end{tikzpicture}	
    \caption{
       An example of $\mdeg(\omega_X)$ on a dual graph $\GG_X$. The bold vertex has weight $1$ and vertices drawn as circles weight $0$. 
    } \label{figure_clifford1}
\end{figure}

\begin{rmk} \label{rmk:uniform residual}
A multidegree $\mdeg(L)$ is uniform, if and only if its residual multidegree $\mdeg(\omega_X \otimes L^{-1})$ is uniform. If a multidegree $\md$ is uniform of total degree $d$, then $0 \leq d \leq 2g - 2$, while the converse is not true. Other than in the irreducible case, if $L$ does not have uniform multidegree, it still can be special -- that is, $h^0(X,L) > 0$ and $h^0(X, \omega_X \otimes L^{-1}) > 0$. See \cite{chr1} for a detailed discussion.
\end{rmk}

\begin{rmk}\label{rmk:uniform nodal}
If $X$ is nodal but not semistable, it contains a smooth rational component attached at a single node. On this component, the dualizing sheaf has negative degree, and hence there exist no uniform multidegrees on such a curve.
\end{rmk}  

\medskip

Recall that the \emph{stabilization} $X_\stab$ of a connected semistable curve $X$ of genus at least $2$ is given by contracting all rational components $X_v$ with $\val(v) = 2$. Such components are called \emph{exceptional components}.

\begin{lma}\label{lma:semistable degree 0}
Let $X$ be a connected semistable curve of genus at least $2$, $\pi:X \to X_\stab$ its stabilization and $\md$ a uniform multidegree. Then $\pi_*$ induces an isomorphism $\Pic^{\md}(X) \to \Pic^{\md'}(X_\stab)$ that satisfies $h^0(X, L) = h^0(X_\stab, \pi_* L)$.
\end{lma}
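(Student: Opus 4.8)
The plan is to decompose $\pi$ into a sequence of contractions of single exceptional components, so that it suffices to treat the case where $X \to X_\stab$ contracts exactly one rational component $E = X_v$ with $\val(v) = 2$, meeting the rest of the curve in (at most) two nodes $p_1, p_2$; the general case follows by induction on the number of exceptional components. Write $Y = E^c = \overline{X \setminus E}$, so $X_\stab$ is obtained from $Y$ by gluing $p_1$ to $p_2$ (or, if $E$ meets $Y$ in a single node, by not changing $Y$ at all except in genus bookkeeping; this degenerate subcase is easy and I would dispatch it first).

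First I would check the statement on the level of Picard schemes. Since $\md$ is uniform, $\md_v \in \{0, 1, 2\}$ because $E \cong \PP^1$ contributes $2g_v - 2 + \val(v) = 0 - 2 + 2 = 0$... wait, more carefully: an exceptional component has $2g_v - 2 + \val(v) = 0$, so uniformity forces $\md_v = 0$. Thus $L$ has degree $0$ on $E$, hence $L|_E \cong \mathcal{O}_E$, and $L$ is pulled back (as a line bundle, up to the gluing data over the two nodes $p_1, p_2$ lying on $E$) from $X_\stab$. Concretely, the contraction $\pi$ identifies $\Pic^{\md}(X)$ with $\Pic^{\md'}(X_\stab)$: using the exact sequence \eqref{eq:ses_pic}, or more directly the pull-back maps from Section~\ref{sec:conventions}, one sees that specifying a line bundle on $X$ of degree $0$ on $E$ is the same as specifying a line bundle on the partial normalization along the two nodes of $E$ together with two gluing parameters in $k^*$, whereas $X_\stab$ has one node in place of those two, with one gluing parameter — and the difference is exactly absorbed by the automorphisms of $(\mathcal{O}_E, \text{two marked points})$, i.e.\ $\PGL_2$ acting transitively on pairs of distinct points of $\PP^1$. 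So $\pi_*$ (equivalently, the inverse of $\pi^*$ on this locus) gives the claimed isomorphism $\Pic^{\md}(X) \xrightarrow{\sim} \Pic^{\md'}(X_\stab)$, where $\md'$ is the restriction of $\md$ to $V(\GG_{X_\stab})$.

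For the cohomological equality, I would argue with Lemma~\ref{lma:subcurves} applied to the subcurve $E \subset X$. Since $\deg(L|_E) = 0$ and $E \cong \PP^1$ meets $Y = E^c$ in the two points $p_1, p_2$, we have $L|_E \cong \mathcal{O}_{\PP^1}$, so $h^0(E, L|_E) = 1$ and $h^0(E, L|_E(-p_1 - p_2)) = h^0(\PP^1, \mathcal{O}(-2)) = 0$; also $L|_Y(-p_1 - p_2)$ has the same global sections as $L|_{X_\stab}$ twisted appropriately — indeed $Y$ is the partial normalization of $X_\stab$ at its node, and $L|_Y$ is the pull-back of $\pi_* L$. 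A section of $L$ on $X$ restricts to a constant on $E$; if that constant is nonzero it is determined, together with its values at $p_1$ and $p_2$ being equal, which is precisely the gluing condition defining sections of $\pi_* L$ on $X_\stab$; if the constant is zero the section vanishes on $E$ and Lemma~\ref{lma:subcurves} identifies it with a section of $L|_Y(-p_1-p_2)$, which again matches a section of $\pi_* L$ vanishing at the node. Chasing this bijection (the restriction–extension sequence $0 \to H^0(Y, L|_Y(-p_1-p_2)) \to H^0(X, L) \to H^0(E, L|_E) $ with the image in $H^0(E,L|_E) \cong k$ being exactly the lifts that glue) yields $h^0(X, L) = h^0(X_\stab, \pi_* L)$.

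The main obstacle I anticipate is purely bookkeeping: making the identification of $L|_Y$ with the pull-back of $\pi_* L$ to the partial normalization of $X_\stab$ fully precise, and handling the edge cases — a loop at $v$ (i.e.\ $p_1, p_2$ the two branches of a self-node, so $E$ meets $Y$ "at one node" combinatorially but two points on $E$), and chains of several consecutive exceptional components, which the induction handles but which require care that each intermediate curve is still semistable and each intermediate multidegree still uniform (this is immediate since contracting a $\val = 2$ rational component does not change $\val$ at any other vertex and removes a vertex with forced degree $0$). None of the genus-$\ge 2$ hypothesis is used beyond guaranteeing $X_\stab$ exists and is stable; it could be relaxed, but I would keep it as stated.
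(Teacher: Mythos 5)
Your argument is correct in substance, but it takes a genuinely different route from the paper. The paper's proof is essentially a citation: uniformity forces $\deg(L|_{X_v})=0$ on every exceptional component, which by \cite[Theorem 3.1 (3)]{EstevesPacini16} guarantees that $\pi_*L$ is a line bundle with $\pi^*\pi_*L\simeq L$; the isomorphism of Picard schemes is then immediate ($\pi^*$ is the inverse), and $H^0(X,L)=H^0(X_\stab,\pi_*L)$ holds tautologically from the definition of push-forward, with no need for the restriction--extension sequence or Lemma~\ref{lma:subcurves}. You instead reprove the Esteves--Pacini input by hand: induction on single contractions, explicit gluing data for the Picard schemes, and an explicit matching of sections. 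That buys self-containedness at the cost of bookkeeping, and it is a legitimate alternative.

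Two corrections to your version. First, the group absorbing the extra gluing parameter is not $\mathrm{PGL}_2$: the curve $X$ is fixed, so reparametrizations of $E\cong\PP^1$ are not available; what acts is $\Aut(\mathcal{O}_E)=H^0(E,\mathcal{O}_E^*)=k^*$, rescaling the trivialization of $L|_E$ and hence changing the two gluing parameters $(\lambda_1,\lambda_2)\in(k^*)^2$ by a common scalar. The quotient is the single $k^*$ of gluing data at the corresponding node of $X_\stab$, so the dimension count you want is $(k^*)^2/k^*=k^*$, not anything $3$-dimensional. Second, the ``degenerate subcase'' in which $E$ meets $Y$ in a single node cannot occur: with $\val(v)=2$ either both edges at $v$ join $E$ to $Y$ (two distinct nodes), or the unique edge is a loop, in which case $E$ is a whole connected component of genus $1$, excluded by connectedness and $g\geq 2$. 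Neither point damages the proof, but the first should be stated correctly if you write it up.
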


\begin{proof}
Let $X_v$ be a rational component of $X$ with $\val(v) = 2$. Hence $2g_v - 2 + \val(v) = 0$ and since $L$ has uniform multidegree, $\deg(L|_{X_v}) = 0$. This ensures that $\pi_* L$ indeed is a line bundle on $X_\stab$, see for example \cite[Theorem 3.1 (3)]{EstevesPacini16}. In \emph{loc. cit} it is furthermore shown that in this case $\pi^* \pi_* L \simeq L$, and thus the map $\pi^* \colon \Pic^{\md'}(X_\stab) \to \Pic^{\md}(X)$ induced by $\pi^*$ gives an inverse to the one induced by $\pi_*$.  Finally, the last claim, $H^0(X, L) \simeq H^0(X_\stab, \pi_* L)$, holds by definition of the push-forward.
\end{proof}

Recall that the \emph{Clifford index} of a smooth curve $X$ is defined as \[\Cliff(X) \coloneqq \min_{L \in \Pic(X)} \left\{ \deg(L) - 2 h^0(X,L) + 2 \mid h^0(X,L) \geq 2 \text{ and } h^1(X,L) \geq 2 \right\}.\]  If $X$ is reducible, we can define the Clifford index of $X$ as the minimum as above, requiring of $L$ in addition to have uniform multidegree. This is a variation of the definition introduced by Franciosi \cite[Definition 3.5]{Franciosi} (see \cite{BE91} for possible different definitions).

\begin{cor}\label{cor:cliffordindex}
Let $X$ be a semistable curve of genus at least $2$ and $\pi \colon X \to X_\stab$ its stabilization. Then \[\Cliff(X) = \Cliff(X_\stab).\]
\end{cor}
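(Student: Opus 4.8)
The plan is to deduce Corollary~\ref{cor:cliffordindex} directly from Lemma~\ref{lma:semistable degree 0}. The key point is that stabilization sets up a bijection between the line bundles relevant to each Clifford index that preserves both the total degree and all cohomological invariants, so the two minima are taken over matching data.

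First I would observe that $\pi \colon X \to X_\stab$ affects neither the genus nor the arithmetic of degrees on the non-exceptional components: if $\md$ is a uniform multidegree on $X$, then on each exceptional component the degree is forced to be $0$ by the uniform condition, so the induced multidegree $\md'$ on $X_\stab$ has the same total degree, $d = \deg(L) = \deg(\pi_* L)$, and $X_\stab$ has the same genus $g$ as $X$. Next, I would note that a multidegree $\md'$ on $X_\stab$ is uniform if and only if the pullback multidegree $\md$ on $X$ is uniform: on the exceptional components $\deg(\omega_X) = 0$, so extending by zero there is the only way to lift, and it stays within the uniform range; conversely contracting a uniform multidegree produces a uniform one since the non-exceptional components are unchanged and $\omega_{X_\stab}$ restricts as $\omega_X$ does on them. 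Therefore $\pi_*$ (with inverse $\pi^*$, as in Lemma~\ref{lma:semistable degree 0}) gives a bijection between line bundles of uniform multidegree on $X$ and those on $X_\stab$.

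Then I would invoke the equalities $h^0(X, L) = h^0(X_\stab, \pi_* L)$ from Lemma~\ref{lma:semistable degree 0} and $h^1(X, L) = h^1(X_\stab, \pi_* L)$ — the latter following either from the same lemma applied to the residual $\omega_X \otimes L^{-1}$, whose multidegree is again uniform by Remark~\ref{rmk:uniform residual}, together with the compatibility $\pi_*(\omega_X \otimes L^{-1}) \simeq \omega_{X_\stab} \otimes (\pi_* L)^{-1}$, or more simply from Riemann--Roch (Theorem~\ref{thm:Riemann Roch}) since $h^0$, $d$ and $g$ all agree. Consequently the quantity $\deg(L) - 2h^0(X,L) + 2$ and the constraints $h^0 \geq 2$, $h^1 \geq 2$ are literally the same for $L$ on $X$ and for $\pi_* L$ on $X_\stab$, so the two minima defining $\Cliff(X)$ and $\Cliff(X_\stab)$ coincide.

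The only mild subtlety — and the step I would be most careful about — is the bijective correspondence of uniform multidegrees under contraction and the compatibility of the residual with $\pi_*$; everything else is bookkeeping. If the paper has not yet recorded $\pi_*(\omega_X \otimes L^{-1}) \simeq \omega_{X_\stab} \otimes (\pi_* L)^{-1}$, I would sidestep it entirely and get $h^1$-invariance from Riemann--Roch as indicated, which keeps the argument self-contained within the results already available.
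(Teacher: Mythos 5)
Your proposal is correct and follows essentially the same route as the paper: both reduce to Lemma~\ref{lma:semistable degree 0} and then check that $\pi_*$ induces a degree-preserving bijection between uniform multidegrees on $X$ and on $X_\stab$ (extension by zero on exceptional components being the inverse, with uniformity preserved because the relevant valences, hence the degrees of the dualizing sheaves on non-exceptional components, are unchanged). Your explicit verification that $h^1$ is also preserved, via Riemann--Roch from the equality of $h^0$, $d$ and $g$, is a small point the paper leaves implicit, and it is handled correctly.
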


\begin{proof}
    In the proof of Lemma~\ref{lma:semistable degree 0} we saw that if $L$ has uniform multidegree $\md$, then it has value $0$ on exceptional components and hence $\pi_* L$ is a line bundle. We write $\pi_* \md$ for the multidegree of $\pi_* L$ in this case. 
    The claim follows immediately from Lemma~\ref{lma:semistable degree 0}, once we establish that $\pi_*$ induces a bijection on the set of uniform multidegrees on $X$ and $X_\stab$.
    
    On the one hand, $\pi_* \md$ has value $\md_v$ on any vertex $v$ of $\GG_{X_\stab}$. The valence of $v$ in $\GG_{X_\stab}$ is equal to the valence of $v$ viewed as a vertex in $\GG_X$, since $X$ is semistable. Thus if $\md$ is uniform, then so is $\pi_* \md$. Furthermore, $\pi_*$ is injective on the set of uniform multidegrees since every uniform multidegree on $X$ has value $0$ on exceptional components.
    
    On the other hand, if $\md$ is a uniform multidegree on $X_\stab$, the multidegree on $X$ obtained by setting all values on exceptional components equal to $0$ gives a uniform multidegree whose image under $\pi_*$ is $\md$, so $\pi_*$ is also surjective on the set of all uniform multidegrees on $X_\stab$.
\end{proof}

\subsection{The tree of 2-edge connected components.}
\label{subsec: tree of components}
Recall that an edge $e$ of $G$ is called a bridge, if $G - e$ has more connected components than $G$. Here $G - e$ denotes the graph obtained from $G$ by removing $e$. A graph $G$ is called a \emph{forest} if every edge is a bridge. It is called a \emph{tree}, if $G$ is in addition connected. A \emph{leaf} in a forest or tree is a vertex that is adjacent to a single edge. 

\begin{defn} \label{def:number of leaves}
Let $G$ be a tree. We denote its \emph{number of leaves} by $\lea(G)$, where we set $\lea(G) = 2$ if $G$ contains a single vertex.
If $G$ is more generally a forest, we set $\lea(G) = \sum_i \lea(G_i)$, where the $G_i$ are the connected components of $G$. 
\end{defn}

A graph $G$ is called \emph{$2$-edge-connected}, if it is connected and contains no bridges. The \emph{$2$-edge-connected components} of a graph $G$ are the maximal $2$-edge-connected subgraphs.

\begin{defn}\label{def:G2}
Let $G$ be a graph. The associated \emph{forest of $2$-edge-connected components} $G^\Br$ is the graph obtained from $G$ by contracting all edges of $G$ that are not bridges. If $G$ is connected, we will call $G^\Br$ the \emph{tree of $2$-edge-connected components}.
\end{defn}

The edges of $G^\Br$ correspond bijectively to bridges of $G$ and the vertices of $G^\Br$ to $2$-edge-connected components of $G$.
Furthermore, as the name suggests, $G^\Br$ is in general a forest, and a tree if $G$ is connected.

\tikzset{every picture/.style={line width=0.75pt}}
    
    \begin{figure}[ht]
    \begin{tikzpicture}[x=0.8pt,y=0.8pt,yscale=-0.8,xscale=0.8]
    \import{./}{figure_clifford2.tex}
    \end{tikzpicture}	
    \caption{
       An example of $G^\Br$ with $\lea\left(G^\Br\right) = 3$. 
    } \label{figure_clifford2}
\end{figure}

Recall that a subgraph $H$ of $G$ is called an induced subgraph, if $H$ contains all edges of $G$ between vertices that are contained in $H$. 

\begin{lma} \label{lma:splitting of number of leaves}
Let $G$ be a connected graph and $H \subsetneq G$ an induced subgraph. Denote by $k$ the number of edges of $G$ that are adjacent to $H$ but not contained in $H$. Then
\begin{enumerate}
    \item We have \[\lea\left(H^\Br\right) \leq \lea\left(G^\Br\right) + k.\]
    \item If $G$ is $2$-edge-connected, then \[\lea\left(H^\Br\right) \leq k.\]
\end{enumerate}
\end{lma}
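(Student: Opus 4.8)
The plan is to understand precisely how the bridges of $G$ are affected by passing to an induced subgraph $H$ together with the $k$ "cut" edges — the edges of $G$ incident to $H$ but not contained in it. First I would recall that $\lea(H^\Br)$ is, by definition, a sum over the connected components of $H$, and that for a connected graph $T$ which is a tree (or a point), $\lea(T)$ is governed by its degree sequence: if $T$ has at least one edge then $\lea(T) = \sum_{v}\max(0,\,2-\deg_T(v))$ is not quite right, but the useful bookkeeping identity is $\lea(T) = 2 + \sum_{v:\deg_T(v)\geq 3}(\deg_T(v)-2) - (\text{correction})$; rather than this, the cleaner route is the recursive/additive behaviour of $\lea$ under removing an edge. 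Concretely, if $e$ is a bridge of a forest $F$, then removing $e$ splits one tree into two and changes the leaf count in a controlled way: $\lea(F - e) \leq \lea(F) + 2$, with the $+2$ attained only when both endpoints of $e$ became new leaves. More relevant here: if $e$ is a bridge of $G$ with an endpoint in $H$, contracting the $2$-edge-connected components does not create leaves out of nothing. I would isolate as the combinatorial core the following statement: attaching one pendant edge (a new leaf) to a forest increases $\lea$ by at most $1$, and deleting an edge from a forest increases $\lea$ by at most $2$; but in our situation the relevant operation increases $\lea$ by at most $1$ per cut edge.

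The key steps, in order, are: (i) Reduce to the case $H$ connected, by noting both sides of the inequality are additive over the connected components of $H$ (the number of cut edges $k$ also distributes over the components, possibly with some cut edges joining two components of $H$ in $G$ — handle these by assigning such an edge to split its contribution, or simply note it still gives $k$ total and the bound only improves). (ii) For $H$ connected, compare $H^\Br$ with $G^\Br$. Every bridge of $H$ is either a bridge of $G$ or not; a bridge of $H$ need not be a bridge of $G$, but I would argue that each bridge of $H$ that fails to be a bridge of $G$ must lie on a cycle of $G$ that exits $H$ through (at least two of) the $k$ cut edges. This gives an injection-type argument bounding the number of "extra" bridges of $H$ by something like $k$. (iii) Translate the bound on bridges / on the structure of $H^\Br$ into a bound on leaves, using that in passing from $G^\Br$ to (a minor related to) $H^\Br$, we delete the contracted-away part and graft on at most $k$ pendant edges corresponding to the cut edges; each such grafting raises the leaf count by at most $1$, and the deletion of a connected piece of a tree, keeping it connected, cannot raise the leaf count. (iv) For part (2), when $G$ is $2$-edge-connected, $G^\Br$ is a single vertex, so the "$\lea(G^\Br)$" term contributes $2$ in Definition~\ref{def:number of leaves}; but now I must get the sharper bound $\lea(H^\Br)\leq k$ with no additive $2$. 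Here I would use $2$-edge-connectedness directly: every bridge of $H$ lies on a cycle in $G$, and that cycle must use at least two cut edges; a careful counting argument (e.g., an Euler-characteristic or ear-decomposition argument on the graph obtained from $H$ by adding back, for each $2$-edge-connected component of $G$ touching $H$, a contracted vertex linking the cut edges) should yield that the tree $H^\Br$, viewed inside the contracted ambient $2$-edge-connected structure, has all its leaves "paid for" by cut edges, giving $\lea(H^\Br)\le k$.

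The main obstacle I expect is step (iii)/(iv): correctly formalizing the passage from $G^\Br$ to $H^\Br$. The subtlety is that $\GG^\Br$ and $H^\Br$ live on different vertex/edge sets, and an edge that is a bridge in $H$ can be a non-bridge in $G$ and vice versa, so there is no literal morphism $H^\Br \to G^\Br$. The right framework is probably to build an auxiliary tree: take $H^\Br$ and, for each cut edge, attach a new pendant vertex; call this $\widehat{H}$. Then show $\widehat{H}$ "maps onto" (via edge contractions) a subtree of $G^\Br$ with the pendant vertices landing on distinct vertices, which forces $\lea(\widehat{H}) \ge \lea(H^\Br) - (\text{pendants that were already leaves})$ on one side and $\le \lea(G^\Br) + k$ on the other. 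Making the accounting of "pendants that were already leaves of $H^\Br$" versus "new leaves created" tight — so that the constants come out as stated ($+k$ in part (1), and exactly $k$ with no constant in part (2)) — is where the care is needed, and it is essentially a clean but fiddly induction on $k$, peeling off one cut edge at a time and invoking the single-pendant-edge and edge-deletion estimates for $\lea$ on forests.
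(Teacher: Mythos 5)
Your plan circles the right idea---that each ``extra'' leaf of $H^\Br$ must be paid for by one of the $k$ cut edges---but both concrete mechanisms you propose for making this precise fail, and you yourself flag the decisive step as an unresolved obstacle. First, the bound in your step~(ii) is false: the number of bridges of $H$ that are not bridges of $G$ is \emph{not} controlled by $k$. Take $G$ a cycle on $n$ vertices and $H$ the induced path on $n-1$ of them; then $k=2$, yet $H$ has $n-2$ bridges, none of which is a bridge of $G$. (The lemma still holds there only because a path has just two leaves.) So any argument that first bounds extra \emph{bridges} and then converts to leaves cannot succeed. Second, the framework of contracting $\widehat{H}$ (that is, $H^\Br$ with a pendant vertex attached per cut edge) onto a subtree of $G^\Br$ gives no leaf bound either: edge contractions can collapse arbitrarily many leaves to a single vertex (a star contracts to a point), so ``$\widehat{H}$ maps onto a subtree of $G^\Br$'' does not bound $\lea(\widehat{H})$ in terms of $\lea(G^\Br)$. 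You also never address the degenerate case where $H$ is itself $2$-edge-connected, so that $\lea(H^\Br)=2$ by convention; for part~(2) this requires the separate (easy) observation that $2$-edge-connectivity of $G$ forces $k\geq 2$.

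The missing idea is to count \emph{leaves} directly, one at a time, rather than bridges or global tree structure. A leaf of $H^\Br$ is a $2$-edge-connected component $H_i$ of $H$ incident to exactly one bridge $e$ of $H$. If $e$ is not a bridge of $G$, it lies on a cycle of $G$; that cycle enters $H_i$ through $e$ and, since $e$ is the \emph{only} edge of $H$ leaving $H_i$, must exit $H_i$ through a cut edge. Because $H$ is induced, each cut edge meets exactly one $H_i$, so distinct such leaves yield distinct cut edges: at most $k$ leaves of this kind. The remaining leaves, those whose incident bridge stays a bridge of $G$, are accounted for by $\lea(G^\Br)$, which gives~(1); and when $G$ is $2$-edge-connected there are no such leaves at all, which gives~(2). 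This is the paper's proof, and it is considerably shorter than the machinery you sketch.
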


\begin{proof}
    It suffices to show the claim for $H$ connected, the general case then follows by applying the claim to each connected component. So assume $H$ is connected.
    
    Assume first $H$ contains no bridges, i.e., it is $2$-edge-connected. Then by definition $\lea \left(H^\Br \right) = 2$. Since $\lea\left(G^\Br \right) \geq 2$, the first claim is immediate. If $G$ is $2$-edge-connected, we have $k \geq 2$. Otherwise there would be a single edge connecting $H$ to its complement in $G$, which then is a bridge. Thus also the second claim follows in this case.
    
    Now let $e \in E(H)$ be a bridge of $H$, whose image in $H^\Br$ is the unique edge adjacent to a leaf $v_i$ of $H^\Br$. Denote by $H_i \subset H$ the $2$-edge connected component corresponding to $v_i$. Assume furthermore, that $e$ is not a bridge in $G$ and thus $v_i$ not a leaf in $G^\Br$. Hence there has to be an edge $e_i$ adjacent to $H_i$ that is not contained in $H$. Since we obtain for each leaf $v_i$ of $H^\Br$ that is not a leaf of $G^\Br$ a different $e_i$, there are at most as many leaves as there are edges of $G$ adjacent to $H$, but not contained in it. This gives both claims in this case.
\end{proof}

\begin{lma}\label{lma: removing edge}
Let $G$ be a $2$-edge connected graph. Then for any edge $e$ of $G$ we have \[\lea\left((G -e)^\Br\right) = \lea\left(G^\Br\right) = 2.\]
\end{lma}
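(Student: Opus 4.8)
The plan is to remove the edge $e$ from the $2$-edge-connected graph $G$ and analyze the effect on the tree of $2$-edge-connected components. First I would dispose of the degenerate possibility: if $G$ consists of a single vertex with a loop edge $e$, then $G - e$ is a single vertex with no edges, so $(G-e)^\Br$ has a single vertex and $\lea\left((G-e)^\Br\right) = 2$ by convention, matching $\lea\left(G^\Br\right) = 2$. So I may assume $G$ has at least one non-loop edge.

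Next I would observe that, since $G$ is $2$-edge-connected, removing a single edge $e$ cannot disconnect $G$; thus $G - e$ is connected, and so $(G-e)^\Br$ is a tree. It remains to show this tree has exactly two leaves, equivalently that it is a path (possibly a single vertex). The key structural fact I would use is that a connected graph whose tree of $2$-edge-connected components is a path is exactly a graph in which the bridges, if any, form a single ``chain''; and deleting one edge from a $2$-edge-connected graph can create at most one bridge's worth of new separation at each end. More precisely, I would argue: if $f$ is a bridge of $G - e$, then $f$ is not a bridge of $G$ (as $G$ has none), so $G - f$ is connected and $e$ must lie on every cycle of $G$ through $f$; hence in $G - e$, the two sides of $f$ are joined only through paths that necessarily used $e$. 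This forces all bridges of $G - e$ to lie on the unique cycle-structure that $e$ ``closed up,'' and in particular they lie on a single path in $G - e$. Concretely, let $e = uv$. Every bridge $f$ of $G - e$ separates $G - e$ into two components, one containing $u$ and one containing $v$ (it cannot separate off a third piece, since adding $e$ back must reconnect everything through the single edge $e$). Therefore the bridges of $G - e$ are linearly ordered by which side contains $u$, so contracting the non-bridges yields a path, giving $\lea\left((G-e)^\Br\right) = 2$.

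To make the argument about bridges of $G - e$ clean, I would phrase it as: a bridge $f$ of $G - e$ gives a partition $V = A \sqcup B$ with $f$ the only edge between $A$ and $B$ in $G - e$; since $G = (G - e) + e$ is $2$-edge-connected, $e$ must join $A$ to $B$, so $u \in A$ and $v \in B$ (up to relabeling). Hence every bridge of $G - e$ corresponds to such an $\{u\}$-vs-$\{v\}$ cut, and any two such cuts are nested (their ``$u$-sides'' are comparable by inclusion), because two crossing bridge-cuts would force more than one connecting edge on one side. Nestedness of the bridge-cuts is precisely the statement that $(G-e)^\Br$ is a path. I would then conclude $\lea\left((G-e)^\Br\right) = 2 = \lea\left(G^\Br\right)$, the latter because $G$ is $2$-edge-connected so $G^\Br$ is a single vertex.

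The main obstacle I anticipate is the care needed in the nestedness argument: showing rigorously that two bridges of $G - e$ cannot ``cross'' requires a short but genuine argument about edge cuts — namely, that if $f_1$ and $f_2$ are bridges of $G-e$ with crossing cuts $A_1|B_1$ and $A_2|B_2$, then one of the four quadrants $A_1 \cap A_2$, $A_1 \cap B_2$, etc.\ is separated from the rest by fewer than the required number of edges, contradicting that re-adding the single edge $e$ restores $2$-edge-connectivity. Alternatively — and this is probably the slicker route I would actually take — I would invoke the already-proved Lemma~\ref{lma:splitting of number of leaves}: writing $G - e$ as an induced subgraph is not quite legitimate since it has the same vertex set, so instead I would note that $G - e$ has no bridge that is a ``branch vertex'' situation because every bridge-cut is an $\{u\}$-vs-$\{v\}$ cut. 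If neither the direct cut argument nor Lemma~\ref{lma:splitting of number of leaves} applies cleanly, the fallback is the elementary observation that a tree with $\geq 3$ leaves has a vertex of degree $\geq 3$, which would correspond to a $2$-edge-connected component of $G - e$ from which three distinct bridges emanate, forcing (after adding $e$ back) a vertex set separated from the rest of $G$ by a single edge — impossible in a $2$-edge-connected graph.
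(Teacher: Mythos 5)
Your proposal is correct and rests on the same key observation as the paper's own proof: since $G$ is $2$-edge-connected, the edge $e$ must cross every bridge-cut of $G - e$, so each leaf of $(G-e)^\Br$ (equivalently, each $2$-edge-connected component of $G-e$ adjacent to a single bridge) must contain an endpoint of $e$, and $e$ has only two endpoints. The paper runs this as a direct count of leaves --- essentially your ``fallback'' argument --- while your primary route via nestedness of the bridge-cuts is a correct but unnecessary detour, since once you know every bridge of $G-e$ separates $u$ from $v$, all bridges already lie on the single path in the tree $(G-e)^\Br$ joining the components of $u$ and $v$.
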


\begin{proof}
    Since $G$ is $2$-edge connected, $G^\Br$ consists of a single vertex and hence $\lea\left(G^\Br\right) = 2$ by definition. If $G - e$ remains $2$-edge connected, the claim is immediate. Otherwise, let $G_i \subsetneq G - e$ be a $2$-edge connected component, that is, a maximal subgraph that is $2$-edge connected. Assume furthermore, that $G_i$ corresponds to a leaf of $(G-e)^\Br$, that is, it is adjacent to a single bridge of $G - e$. Since $G$ is $2$-edge connected its unique $2$-edge connected component is $G$ itself. In particular, $G_i$ is not a $2$-edge connected component of $G$ since $G_i \neq G$. It follows that $G_i$ needs to be adjacent to an edge of $G$ that is not contained in $G - e$. Since $e$ is the only such edge, and $e$ can be adjacent to only two distinct subgraphs $G_i$ as above, it follows that $(G-e)^\Br$ has two leaves and thus $\lea\left((G -e)^\Br\right) = 2$, as claimed.
\end{proof}

As for curves, to any semistable graph $G$ of genus at least $2$, we can associate a unique stable graph $G_\stab$ by successively contracting edges that are adjacent to $2$-valent vertices of weight $0$. We call $G_\stab$ the stabilization of $G$. If $G$ is the dual graph of a semistable curve $X$, then $G_\stab$ is the dual graph of the stabilization $X_\stab$ of $X$.

\begin{lma}\label{lma:leaves_stabilization}
Let $G$ be a semistable graph of genus at least $2$ and $G_\stab$ its stabilization. Then \[\lea(G^\Br) = \lea\left(G_\stab^\Br\right).\]
\end{lma}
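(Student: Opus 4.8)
The plan is to show that stabilization does not change the tree of $2$-edge-connected components at all, i.e., $G^\Br \cong G_\stab^\Br$, from which the equality $\lea(G^\Br) = \lea(G_\stab^\Br)$ is immediate. Since $G_\stab$ is obtained from $G$ by successively contracting edges adjacent to $2$-valent weight-$0$ vertices, it suffices by induction to treat a single such contraction: let $v$ be a $2$-valent vertex of weight $0$, let $e$ be an edge at $v$, and let $G' = G/e$ be the result of contracting $e$. I would then compare $G^\Br$ with $(G')^\Br$.

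First I would record the key observation that contracting $e$ does not affect which edges are bridges: an edge $f \neq e$ is a bridge of $G$ if and only if it is a bridge of $G' = G/e$, because contracting an edge induces a bijection on the cycle space (equivalently, $G - f$ is disconnected iff $G' - f$ is disconnected, since $e$ itself lies on no cycle precisely when $e$ is a bridge, and we must split into the two cases below). Here it matters whether $e$ is itself a bridge of $G$. If $e$ is not a bridge, then $e$ lies in some $2$-edge-connected component $C$ of $G$; contracting $e$ merges two vertices of $C$ but leaves $C$ $2$-edge-connected, and all other $2$-edge-connected components and all bridges are untouched, so $G^\Br = (G')^\Br$ on the nose. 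If $e$ is a bridge, then since $v$ has valence $2$, the other edge $e'$ at $v$ is also a bridge (a valence-$2$ vertex one of whose edges is a bridge has both edges bridges, as $v$ lies on no cycle); contracting $e$ simply identifies $v$ with its neighbor, turning the length-$2$ path $e, e'$ through the weight-$0$ valence-$2$ vertex into the single bridge $e'$. In the tree $G^\Br$, the vertex $v$ is itself a $2$-edge-connected component (a single vertex) of valence $2$, and contracting $e$ removes this valence-$2$ vertex and its adjacent edge, which is exactly contracting $e$ inside $G^\Br$; a valence-$2$ internal vertex of a tree being suppressed changes neither the isomorphism type enough to matter nor the leaf count. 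So in both cases $\lea(G^\Br)$ is preserved.

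The one genuine subtlety — the step I expect to be the main obstacle — is the degenerate case where $G$ itself is a cycle, or more generally where the contraction would reduce $G^\Br$ to a single vertex, so that the convention $\lea = 2$ must be invoked rather than an actual leaf count. Concretely, if $G$ is a single cycle (all vertices weight $0$, valence $2$), then $G_\stab$ is a single vertex with a loop (genus $1$) or, if $g \geq 2$ forces at least one positive-weight or higher-valence vertex, this situation is excluded; one must check the hypotheses "genus at least $2$" and "semistable" rule out $G^\Br$ or $G_\stab^\Br$ being empty and guarantee that whenever $G^\Br$ is a single vertex so is $G_\stab^\Br$, so that both sides equal $2$ by convention. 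I would handle this by noting that $G^\Br$ depends only on the bridges and $2$-edge-connected components of the underlying graph, that stabilization contracts only non-bridge edges (a valence-$2$ weight-$0$ vertex $v$: if one of its edges were a bridge so is the other, but then $v$ would be an exceptional vertex whose contraction is covered in the bridge case above — wait, this needs the careful case split), and hence appeal to Lemma~\ref{lma: removing edge} and Lemma~\ref{lma:splitting of number of leaves} if a direct argument gets awkward. But I expect the clean statement to be simply: stabilization only ever contracts edges of $G$ that either lie in a $2$-edge-connected component (leaving $G^\Br$ literally unchanged) or are one of two bridges flanking a suppressible valence-$2$ vertex (which corresponds to suppressing a valence-$2$ vertex of the tree $G^\Br$, preserving leaf count and the $\lea = 2$ convention), and in neither case does $\lea$ change.
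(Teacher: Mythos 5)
Your proposal is correct and follows essentially the same route as the paper: reduce to a single contraction of an edge at a $2$-valent weight-$0$ vertex, and observe that a non-bridge contraction leaves $G^\Br$ unchanged while a bridge contraction merely suppresses a valence-$2$ vertex of the tree $G^\Br$, preserving the leaf count. Note only that your opening claim $G^\Br \cong G_\stab^\Br$ is too strong (the two trees differ by such suppressions in the bridge case), but your argument itself only uses, and correctly establishes, the equality of leaf counts.
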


\begin{proof}
    A choice of edge contraction map $\pi \colon G \to G_\stab$ allows to realize the edges of $G_\stab$ as a subset of the edges of $G$.
    It is easy to see, that the bridges of $G_\stab$ are bridges of $G$ under this identification and the additional bridges of $G$ all get contracted by $\pi$ \cite[Lemma 2.1.1]{CC}.
    In particular, we have a well-defined map $\pi^\Br \colon G^\Br \to G_\stab^\Br$ given by contracting bridges of $G$ that get contracted by $\pi$.
    
    The map $\pi$ can be realized by successively contracting single edges. Thus we may assume that $\pi$ contracts a single edge $e$ of $G$. Since $e$ gets contracted by $\pi$ and $G$ is semistable, it is adjacent to a $2$-valent weight $0$ vertex $v$. By what we said above, $\pi^\Br$ is trivial if $e$ is not a bridge of $G$. If $e$ is a bridge, then also the second edge adjacent to $v$ needs to be a bridge. Thus $v$ is itself a $2$-edge connected component of $G$ and we may view it as a vertex of $G^\Br$. Then $\pi^\Br$ contracts $e$, viewed as an edge of $G^\Br$ and $G^\Br$ is obtained from $G_\stab^\Br$ by replacing one edge with a weight $0$ vertex and two adjacent edges. In particular, the number of leaves of the two graphs remains the same.   
\end{proof}

\subsection{Dhar subgraphs} \label{subsec:Dhar}
Let $G$ be a graph, $v \in V(G)$ a vertex and $\md$ a multidegree on $G$. Following \cite[\S 3.4]{CLM}, we define a sequence of induced subgraphs \begin{equation} \label{eq:Dhar decomposition}
    H_0 = \{v\} \subset H_1 \subset \ldots \subset H_n = \Dh(v, \md),
\end{equation}
the Dhar decomposition, iteratively as follows. Given a subgraph $H_i$, consider the multidegree $\md'$ obtained from $\md$ by subtracting from each vertex $v \in V(G)\setminus V(H_i)$ the number of edges that are adjacent to $v$ and $H_i$. Then $H_{i+1}$ is the induced subgraph of vertices in $H_i$ and those vertices adjacent to $H_i$, on which $\md'$ is negative. Since there are only finitely many vertices, at some point $H_{n} = H_{n+1}$ and we define $\Dh(v, \md) \coloneqq H_{n}$ for the \emph{Dhar subgraph associated to $\md$ and $v$}.

\tikzset{every picture/.style={line width=0.75pt}}
    
    \begin{figure}[ht]
    \begin{tikzpicture}[x=0.8pt,y=0.8pt,yscale=-0.8,xscale=0.8]
    \import{./}{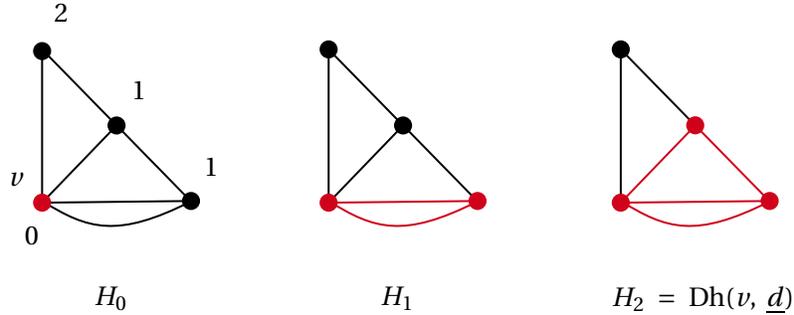}
    \end{tikzpicture}	
    \caption{
       A Dhar decomposition. The $H_i$ are the subgraphs in red.  
    } \label{figure_clifford3}
\end{figure}

\begin{lma}\label{lma:dhar restriction}
Let $L$ be a line bundle of multidegree $\md$ and $v \in V(\GG_X)$ a vertex corresponding to an irreducible component $X_v$. Suppose that the Dhar set equals all of $\GG_X$, that is, $\Dh(v, \md) = \GG_X$. Then the restriction map $H^0(X, L) \to H^0(X_v, L|_{X_v})$ is injective. In particular, 
$h^0(X,L) \leq h^0(X_v, L|_{X_v})$.
\end{lma}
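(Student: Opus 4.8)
The plan is to exploit the inductive structure of the Dhar decomposition \eqref{eq:Dhar decomposition} and show that a global section of $L$ vanishing along $X_v$ must in fact vanish on all of $X$, hence be zero. Concretely, let $s \in H^0(X,L)$ be a section with $s|_{X_v} = 0$. I would prove by induction on $i$ that $s$ vanishes identically on the subcurve $X_{H_i}$ corresponding to $H_i$ in the Dhar decomposition. The base case $i=0$ is the hypothesis $s|_{X_v} = 0$. Since $H_n = \Dh(v,\md) = \GG_X$, the induction yields $s = 0$, so the restriction map $H^0(X,L) \to H^0(X_v, L|_{X_v})$ has trivial kernel, giving the claimed injectivity and the inequality $h^0(X,L) \leq h^0(X_v, L|_{X_v})$.

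For the inductive step, suppose $s$ vanishes on $X_{H_i}$. For any vertex $w \in V(H_{i+1}) \setminus V(H_i)$ I want to show $s|_{X_w} = 0$. By definition of the decomposition, $w$ is adjacent to $H_i$ and the modified multidegree $\md'_w = \md_w - (\text{number of edges joining } w \text{ to } H_i)$ is negative. The section $s$ vanishes at every node of $X_w \cap X_{H_i}$ (because it vanishes on $X_{H_i}$), and these nodes are smooth points of $X_w$; each edge from $w$ to $H_i$ forces vanishing at such a point (with loops/multiplicities handled by the number of adjacent edges). Therefore $s|_{X_w}$ is a section of $L|_{X_w}$ twisted down by at least the number of these nodes, i.e.\ a section of a line bundle of degree $\md_w - \#\{\text{edges } w\text{ to }H_i\} = \md'_w < 0$ on $X_w$. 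A line bundle of negative degree on an irreducible (nodal) curve has no nonzero global sections, so $s|_{X_w} = 0$. Doing this for all such $w$ shows $s$ vanishes on $X_{H_{i+1}}$, completing the step. (One should be slightly careful about how much vanishing a node of $X$ lying on a single component forces, and about reducibility within $H_i$ versus the irreducible components; but since the components $X_w$ are irreducible and the relevant nodes are smooth points of $X_w$, the degree count on each $X_w$ individually is what matters.)

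The main technical point — and the only place requiring care — is the bookkeeping of vanishing orders at the nodes: one must ensure that a section vanishing on the component(s) across a node genuinely vanishes \emph{at} that node as a point of $X_w$, and that the count of such forced vanishing points on $X_w$ is exactly the number of edges in $\GG_X$ joining $w$ to $H_i$ (this is precisely the quantity subtracted in the definition of $\md'$). Granting that, the restriction of $s$ to $X_w$ lands in $H^0(X_w, (L|_{X_w})(-D))$ with $\deg D$ equal to that edge count, so the twisted bundle has negative degree and the section vanishes. I expect no further obstacles: everything else is the straightforward unwinding of the definition of $\Dh(v,\md)$ together with the vanishing of sections of negative-degree bundles on irreducible curves.
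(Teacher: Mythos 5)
Your proposal is correct and follows essentially the same route as the paper: induction along the Dhar decomposition, using that a section vanishing on $Y_i$ vanishes at the $|X_w \cap Y_i| > \md_w$ nodes on $X_w$, which forces it to vanish on the irreducible component $X_w$ since a nonzero section of a degree-$\md_w$ line bundle cannot vanish at more than $\md_w$ points. The bookkeeping concerns you raise are handled exactly as you anticipate, so there is no gap.
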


\begin{proof}
Suppose a global section $s$ of $L$ is in the kernel of $H^0(X, L) \to H^0(X_v, L|_{X_v})$, that is, it vanishes on $X_v$. We need to show that then $s$ vanishes on all of $X$. 

Let \[Y_0 = X_v \subset Y_2 \subset \ldots \subset Y_n = X\] be the subcurves corresponding to the graphs $H_i$ in the Dhar decomposition \eqref{eq:Dhar decomposition}. Since $s$ vanishes on $X_v$ and $\Dh(v, \md) = \GG_X$, it suffices to show that if $s$ vanishes on $Y_i$ then it vanishes on $Y_{i + 1}$. So let $w \in V(H_{i + 1}) \setminus V(H_i)$ be a vertex of $H_{i +1}$ but not of $H_i$. By construction of the $H_i$, we then have $\md_w < |X_w \cap Y_i|$. On the other hand, if $s$ vanishes on $Y_i$, it vanishes in particular at $X_w \cap Y_i$. Since $\md_w$ is the degree of the restriction of $L$ to $X_w$ this implies that $s$ vanishes on all of $X_w$.
\end{proof}

Recall that we defined uniform multidegrees in Definition~\ref{def:uniform}.

\begin{lma}\label{lma:dhar uniform}
Let $X$ be a semistable curve, $v \in V(\GG_X)$ a vertex and $L$ a line bundle on $X$ with uniform multidegree $\md$. Let $Y$ be the subcurve of $X$ corresponding to the Dhar set $\Dh(v, \md)$, and $Y^c$ the closure of its complement. Then if $Y^c$ is not empty, it is semistable and the multidegree of $L|_{Y^c}(-Y \cap Y^c)$ is uniform on $Y^c$.
\end{lma}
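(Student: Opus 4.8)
The plan is to verify the two assertions separately, working component by component on $Y^c$. For the semistability of $Y^c$, I would take an irreducible component $X_w$ with $w \in V(\GG_{Y^c})$ and bound its valence $\val_{Y^c}(w)$ from below. The key observation is that passing from $X$ to $Y^c$ only removes the edges of $\GG_X$ that join $w$ to a vertex of $Y = \Dh(v,\md)$; if $k_w$ denotes the number of such edges, then $\val_{Y^c}(w) = \val_X(w) - k_w$, where on $Y^c$ the nodes in $X_w \cap Y$ become $k_w$ distinct smooth points. So I need $2g_w - 2 + \val_X(w) - k_w \geq 0$. Here I would use the defining property of the Dhar decomposition: since $w \notin V(\Dh(v,\md))$, the process stabilized without absorbing $w$, which means at the final stage the modified multidegree $\md'$ is \emph{non-negative} at $w$, i.e. $\md_w - k_w \geq 0$, equivalently $k_w \leq \md_w$. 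Combining with uniformity ($\md_w \leq 2g_w - 2 + \val_X(w)$) gives $k_w \leq 2g_w - 2 + \val_X(w)$, hence $\val_{Y^c}(w) = \val_X(w) - k_w \geq \val_X(w) - (2g_w - 2 + \val_X(w)) = 2 - 2g_w$. If $g_w \geq 1$ this is automatically satisfied with room to spare, and if $g_w = 0$ we get $\val_{Y^c}(w) \geq 2$, which is exactly semistability at $w$.

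For the uniformity of $\mdeg(L|_{Y^c}(-Y\cap Y^c))$ on $Y^c$, I would compute its value at $w$. The line bundle $L|_{Y^c}$ has degree $\md_w$ on $X_w$, and twisting down by the $k_w$ points of $X_w \cap Y$ subtracts $k_w$, so the multidegree at $w$ equals $\md_w - k_w$. The lower bound $\md_w - k_w \geq 0$ is precisely the Dhar non-negativity already invoked above. For the upper bound I need $\md_w - k_w \leq 2g_w - 2 + \val_{Y^c}(w) = 2g_w - 2 + \val_X(w) - k_w$, which simplifies to $\md_w \leq 2g_w - 2 + \val_X(w)$ — exactly the uniformity of $\md$ on $X$. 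So both inequalities reduce to facts we already have: the lower bound to the stopping condition of the Dhar algorithm, the upper bound to uniformity of the original multidegree.

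The one point that needs care — and is the main (modest) obstacle — is the bookkeeping of valences and of which nodes are smooth points of $Y^c$: a node of $X$ in $X_w \cap Y$ is a smooth point of $X_w$ and lies on $Y$, so it contributes to $Y \cap Y^c$ and gets twisted down; a node of $X$ internal to $Y^c$ (including a loop at $w$, or a node joining $w$ to another vertex of $Y^c$) stays and contributes to $\val_{Y^c}(w)$; one must also check that loops at $w$ and parallel edges are counted consistently on both sides, which is why $\val_{Y^c}(w) = \val_X(w) - k_w$ holds with the convention that loops count twice. With that accounting in place, everything else is a direct substitution, and I would also note the harmless edge case: the lemma already assumes $Y^c$ nonempty, and if $Y = \{v\}$ we are simply restricting to $X_v^c$, which the same computation handles. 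I would present the argument by fixing an arbitrary $w \in V(\GG_{Y^c})$, recording $k_w \leq \md_w$ from the Dhar stopping condition and $\md_w \leq 2g_w - 2 + \val_X(w)$ from uniformity, and then reading off both the semistability inequality and the two uniformity inequalities from these.
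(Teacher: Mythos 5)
Your proof is correct and the core of it — deriving both uniformity inequalities at each vertex $w$ of $Y^c$ from the Dhar stopping condition $\md_w - k_w \geq 0$ together with the uniformity bound $\md_w \leq 2g_w - 2 + \val_{\GG_X}(w)$ and the valence identity $\val_{\GG_X}(w) = \val_{Y^c}(w) + k_w$ — is exactly the computation in the paper. The only divergence is in how semistability of $Y^c$ is handled: you prove it directly by combining the same two inequalities to get $2g_w - 2 + \val_{Y^c}(w) \geq 0$, whereas the paper dispenses with it in one line by invoking Remark~\ref{rmk:uniform nodal} (a non-semistable nodal curve carries no uniform multidegree at all, so semistability is automatic once uniformity is established). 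Both are valid; the paper's shortcut is slicker, but your direct valence count makes explicit that semistability and uniformity rest on the same two facts, which is arguably more transparent.
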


\begin{proof}
    If $Y^c$ is not semistable, there exists no uniform multidegrees on it by Remark~\ref{rmk:uniform nodal}. Hence it suffices to show that $L|_{Y^c}(-Y \cap Y^c)$ has uniform multidegree. Let $H = \GG_{Y^c}$ be the subgraph corresponding to $Y^c$, that is, the induced subgraph with vertices $V(\GG_X) \setminus V(\Dh(v, \md))$. Let $\md'$ be the multidegree of $L|_{Y^c}(-Y \cap Y^c)$, for which we need to show that it is uniform on $H$.
    
    If a vertex $w$ of $H$ is not adjacent to $\Dh(v, \md)$ in $\GG_X$, then $\md_w = \md_w'$ and the valence of $w$ is the same in $\GG_X$ and $H$. Hence $\md'$ is uniform at $w$ since so is $\md$. 
    
    Now suppose $w$ is adjacent to $\Dh(v, \md)$ in $\GG_X$. Then $\md'_w = \md_w -k$, where $k = |X_w \cap Y|$ is the number of edges adjacent to both $w$ and $\Dh(v, \md)$. Observe first, that $\md'_w \geq 0$ since otherwise we would need to have $w \in \Dh(v, \md)$ by the construction of $\Dh(v, \md)$. Thus the lower bound for being uniform is satisfied. For the upper bound, observe that by assumption $\md_w \leq 2g_w - 2 + \val_{\GG_X}(w)$, where $\val_{\GG_X}(w)$ denotes the valence of $w$ in $\GG_X$. Since $\val_{\GG_X}(w) = \val_H(w) + k$, where $k$ is as above and $\val_H(w)$ denotes the valence of $w$ in $H$, we also get the upper bound \[\md'_w = \md_w - k \leq 2g_w - 2 + \val_{\GG_X}(w) - k = 2g_w - 2 + \val_{H}(w).\]
\end{proof}

\section{Counterexamples} \label{sec:counterexamples}

In this section, we construct examples of uniform multidegrees that do not satisfy the classic Clifford inequality $h^0(X,L) \leq \frac{d}{2} + 1$. In Proposition~\ref{prop:vertex of valence three}, we show that in fact on every semistable curve $X$ there exist line bundles of uniform multidegree, that achieve equality in Theorem~\ref{thm:clifford for uniform multidegrees}.

\begin{ex} \label{ex:basic}
Let $X$ be the stable curve consisting of three smooth, irreducible genus one curves $X_i$, each attached along a single node $p_i$ to a smooth, irreducible rational curve $X_v$. Consider the multidegree $\md$ with $\ud_v = 0$ and $\ud_{v_i} = 1$ where $v_i$ is the vertex corresponding to $X_i$. Thus $\md$ is uniform of total degree $3$. Let $L$ be the line bundle of multidegree $\md$ whose restriction to $X_i$ is $\mathcal O_{X_i}(p_i)$. In particular, $p_i$ is a base point of $L|_{X_i}$ and global sections of $L$ vanish along $X_v$. Thus by Lemma~\ref{lma:subcurves}, \[h^0(X,L) = h^0(X_v^c, L|_{X_v^c}\left(-X_v \cap X_v^c)\right) = \sum_{i =1}^3 h^0(X_i, \cO_{X_i}) = 3 > \frac{3}{2} + 1.\]
\end{ex}
\begin{rmk}\label{rmk:other statements clifford}
Example~\ref{ex:basic} contradicts the claim in \cite[Theorem 3.14]{Franciosi} that every line bundle of uniform multidegree satisfies the classic Clifford inequality. In case of Example~\ref{ex:basic}, Equation (9) in the proof of \emph{loc. cit.} no longer holds. The example also contradicts the claim of \cite[Proposition 3.1]{caporasosemistable}, that every multidegree $\md$ with $0 \leq \md_v \leq 2g_v$ satisfies the classic Clifford inequality. Here the issue is, that in an inductive argument, the claim is applied to $L(-p)$ for some smooth point $p$ of $X$. But $L(-p)$ can have negative degree on the irreducible component containing $p$, which then is outside the range of the induction. 
\end{rmk}

\begin{ex}
More generally, let $X$ be a stable curve and $\omega_X$ its dualizing sheaf. Suppose $\omega_X$ has a smooth base point $p \in X$. We then have 
\[
h^0(X, \omega_X(-p)) = h^0(X, \omega_X) = g > g - \frac{1}{2} = \frac{2g - 3}{2} + 1. 
\]
Thus $\omega_X(-p)$ does not satisfy the classic Clifford inequality. On the other hand, the multidegree of $\omega_X(-p)$ is uniform since $X$ is stable. By \cite[Theorem D, p. 75]{Catanese}, the dualizing sheaf $\omega_X$ has smooth base points precisely along smooth rational components $X_v$ such that all points in $X_v \cap X_v^c$ are separating nodes. In this case, the global sections of $\omega_X$ vanish on all of $X_v$. Thus we can subtract up to $\val(v) - 2$ smooth points on $X_v$ from $\omega_X$, obtaining in each case a line bundle with uniform multidegree not satisfying the classic Clifford inequality.   
\end{ex}

Recall that we defined the forest of $2$-edge-connected components $\GG_X^\Br$ of the dual graph $\GG_X$ in Definition~\ref{def:G2}. Furthermore, we denote by $\lea\left(\GG_X^\Br\right)$ its number of leaves as in Definition~\ref{def:number of leaves}. 

\begin{prop}\label{prop:vertex of valence three}
Let $X$ be a semistable curve. Then there is a line bundle $L$ on $X$ with uniform multidegree  such that
\[
h^0(X, L) = \frac{\deg(L)}{2} + \frac{\lea\left(\GG_X^\Br\right)}{2}.
\]

\end{prop}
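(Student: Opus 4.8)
The plan is to reduce to the stabilization and then build the line bundle explicitly, leaf by leaf, on the tree of $2$-edge-connected components $\GG_X^\Br$. By Corollary~\ref{cor:cliffordindex} and Lemma~\ref{lma:leaves_stabilization}, both sides of the desired equality are unchanged under stabilization (and $h^0$ is preserved by Lemma~\ref{lma:semistable degree 0}), so I may assume $X$ is stable. Likewise, if $X$ is disconnected I treat each connected component separately, since $h^0$, $\deg$, and $\lea(\GG_X^\Br)$ are all additive over connected components; so assume $X$ is connected. Write $T = \GG_X^\Br$ for the tree of $2$-edge-connected components, and let $\ell = \lea(T)$.

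The core construction generalizes Example~\ref{ex:basic}. To each leaf $v_i$ of $T$ corresponds a $2$-edge-connected component of $\GG_X$ attached to the rest of $X$ along a single separating node $q_i$; call the corresponding subcurve $Z_i \subset X$ (with $Z_i \cap Z_i^c = \{q_i\}$). On $Z_i$ I choose a line bundle $M_i$ of uniform multidegree whose residual $\omega_{Z_i}\otimes M_i^{-1}$ has $q_i$ as a base point with all sections vanishing on some component — equivalently (by Corollary~\ref{cor:RR applied1}), I arrange $M_i$ so that $h^0(Z_i, M_i) = h^0(Z_i, M_i(-q_i)) + 1$ but with $q_i$ \emph{not} a base point of $M_i$; concretely one can take $M_i = \omega_{Z_i}$, or, if the genus bookkeeping requires, a suitable sub-line-bundle obtained by removing points away from $q_i$. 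On the interior of $T$ (the non-leaf part) I choose the multidegree so as to force all sections to vanish there. Then, gluing the $M_i$ across the interior and invoking Lemma~\ref{lma:subcurves} (repeatedly, peeling off the interior subcurve since sections vanish along it) together with Lemma~\ref{lma:neutral pairs} to control what happens at each separating node $q_i$, I get $h^0(X, L) = \sum_i h^0(Z_i, M_i)$. A parallel computation of total degrees, using $\deg \omega_X = 2g-2$ and the additivity of multidegrees over the separating nodes, should yield $\deg(L) = \sum_i \deg(M_i)$ adjusted by the separating nodes, and the arithmetic then collapses to $h^0(X,L) = \deg(L)/2 + \ell/2$. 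A clean way to organize this is by induction on $\ell$ (or on the number of separating nodes): the base case $\ell = 2$ is where $T$ is a path or a single vertex, handled by a direct construction on a $2$-edge-connected piece plus two "pendant" pieces, and the inductive step removes one leaf $Z_i$, applies Lemma~\ref{lma:subcurves}, and quotes Lemma~\ref{lma:splitting of number of leaves} to track how $\ell$ drops.

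The main obstacle is the genus-balancing at the leaves: I need, on each leaf component $Z_i$, a line bundle of \emph{uniform} multidegree realizing the "excess" $h^0 = \deg/2 + 1$ at that leaf, and simultaneously I need the total degree to add up correctly so that the global count lands exactly on $\deg(L)/2 + \ell/2$ rather than merely $\leq$. When $Z_i$ has a component of positive geometric genus this is easy (mimic Example~\ref{ex:basic} with an elliptic tail and $\cO(q_i)$, or use $\omega_{Z_i}$); the delicate case is when $Z_i$ is built from rational components only, where one must verify that the chosen multidegree stays in the uniform range $0 \le \md_w \le 2g_w - 2 + \val(w)$ at every vertex while still producing the base point at $q_i$ — this is exactly the place where stability of $X$ (hence $\val(w) \ge 3$ at rational $w$) is used to guarantee enough room. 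The second, more routine, difficulty is the gluing: ensuring that the sections constructed on the $Z_i$ and made to vanish on the interior genuinely glue to a global section of a single $L$ on $X$, which is handled node-by-node by Lemma~\ref{lma:neutral pairs}, noting that at a separating node the neutral-pair condition is automatic once the point is a common base point (as remarked after Definition~\ref{def: neutral pair}).
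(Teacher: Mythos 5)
Your overall skeleton --- peel off the leaf $2$-edge-connected components $Z_i$, force all global sections to vanish on the interior $Y$, and apply Lemma~\ref{lma:subcurves} --- is the same as the paper's. But the core construction on each leaf is inverted, and with your concrete choice the equality fails. You ask for $q_i$ to \emph{not} be a base point of $M_i$, with $h^0(Z_i,M_i)=h^0(Z_i,M_i(-q_i))+1$, and propose $M_i=\omega_{Z_i}$. What the argument actually requires is the opposite: $q_i$ must \emph{be} a base point of $M_i$, so that every global section of $L$ vanishes at $q_i$, hence is the zero constant on $Y$ (where $L|_Y=\mathcal O_Y$), and so that no dimension is lost when passing to $M_i(-q_i)$. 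The correct choice is $M_i=\omega_{Z_i}(q_i)$: by Corollary~\ref{cor:RR applied2} the point $q_i$ is a base point, $h^0\left(Z_i,M_i(-q_i)\right)=h^0\left(Z_i,\omega_{Z_i}\right)=g(Z_i)$, and $\deg(M_i)=2g(Z_i)-1$, so each leaf contributes exactly $\deg(M_i)/2+1/2$ and the total is $\deg(L)/2+\ell/2$. With your $M_i=\omega_{Z_i}$ one gets instead $h^0(X,L)=1+\sum_i\left(g(Z_i)-1\right)$ (the sections on the $Z_i$ must match a common constant on $Y$ at the $q_i$), while $\deg(L)/2+\ell/2=\sum_i g(Z_i)-\ell/2$; for $\ell\geq 3$ this is a strict shortfall, so your line bundle only verifies the inequality, not the claimed equality. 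Since the statement is an exact equality, the degree bookkeeping you defer to ``a parallel computation'' is precisely where the proof lives and cannot be left vague.

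Two smaller points. The ``delicate case'' you flag (leaves built from rational components, needing stability to make room in the uniform range) is not an issue: the multidegree of $\omega_{Z_i}(q_i)$ on each component of $Z_i$ equals that of $\omega_X$ there, because $Z_i$ meets $Y$ only at $q_i$; uniformity is automatic and no reduction to the stable case is needed. And the base case is simply $\lea\left(\GG_X^\Br\right)=2$, where $L=\mathcal O_X$ or $L=\omega_X$ already achieves equality; no auxiliary decomposition into ``a $2$-edge-connected piece plus two pendant pieces'' is required.
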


\begin{proof}
    We may assume that $X$ is connected, since both sides of the equation are additive on connected components. If $\lea\left(\GG_X^\Br\right) = 2$, the claim is the classic Clifford inequality, and we may choose for $L$ either the dualizing sheaf or the structure sheaf of $X$. 
    
    Otherwise, set $l := \lea\left(\GG_X^\Br\right)$. Let $v_i$ with $1 \leq i \leq l$ denote the leaves of $\GG_X^\Br$. Each of them corresponds to a $2$-edge-connected subgraph of $\GG_X$, and we denote by $Y_i \subset X$ the corresponding subcurves. Let \[Y = \left( \bigcup_i Y_i \right)^c\] be the closure of the complement in $X$. It is not empty, since $\lea\left(\GG_X^\Br\right) \geq 3$ implies that there is a vertex of $\GG_X^\Br$ that is not a leaf. Set $p_i = Y_i \cap Y$, a collection of $l$ distinct separating nodes of $X$.
   
   Since the $Y_i$ are by definition connected to $Y$ along separating nodes, we have $\Pic(X) \simeq \Pic(Y) \times  \prod_i \Pic(Y_i)$ (see the discussion at the end of Section~\ref{sec:conventions}). Thus any line bundle on $X$ is up to automorphism specified by its restrictions to $Y$ and the $Y_i$. \emph{Vice versa}, any collection of line bundles on the $Y_i$ glues to a unique line bundle on $X$. 
   
   We define $L$ by setting $L|_Y = \mathcal O_Y$ and $L|_{Y_i} = \omega_{Y_i}(p_i)$ where $\omega_{Y_i}$ denotes the dualizing sheaf of $Y_i$. By construction, $L$ has uniform multidegree. By Corollary~\ref{cor:RR applied2}, $p_i$ is a base point of $L|_{Y_i}$. Thus global sections of $L$ vanish on all of $Y$ and by Lemma~\ref{lma:subcurves}:
   \[
   h^0(X, L) = \sum_{i = 1}^l h^0\left(Y_i, L|_{Y_i}(-p_i)\right) = \sum_{i = 1}^l h^0\left(Y_i, \omega_{Y_i}\right) = \sum_{i = 1}^l g(Y_i).
   \]
   On the other hand, we get for the total degree of $L$:
   \[
   \deg(L) = \sum_{i = 1}^l \left (2 g(Y_i) - 1\right )  = 2 \sum_{i = 1}^l g(Y_i) - l .
   \]
   And thus as claimed: 
   \[
h^0(X, L) = \sum_{i = 1}^l g(Y_i) = \frac{2 \sum_{i = 1}^l g(Y_i) - l}{2} + \frac{l}{2} = \frac{\deg(L)}{2} + \frac{\lea\left(\GG_X^\Br\right)}{2}.
\]
\end{proof}

\begin{rmk}
The construction in the proof of Proposition~\ref{prop:vertex of valence three} is not the only way to obtain a line bundle realizing the upper bound in the Clifford inequality for uniform multidegrees. For example, setting (some of the) $L|_{Y_i}$ to be $\mathcal O_{Y_i}(p_i)$ gives another way to construct $\md$ and $L$.
\end{rmk}

\section{The Inequality}

\subsection{The Clifford inequality for uniform multidegrees} \label{subsec:clifford inequality}
Recall that we defined the forest of $2$-edge connected components $\GG_X^\Br$ of the dual graph of $\GG_X$ in Definition~\ref{def:G2}. Recall furthermore, that $\lea\left(\GG_X^\Br\right)$ denotes it's number of leaves as in Definition~\ref{def:number of leaves}.

\begin{thm} \label{thm:clifford for uniform multidegrees}
Let $X$ be a semistable curve and $\ud$ a uniform multidegree of total degree $d$. Then every line bundle $L$ of multidegree $\md$ satisfies
\[
h^0(X, L) \leq \frac{d}{2} + \frac{\lea\left(\GG_X^\Br\right)}{2}.
\]
\end{thm}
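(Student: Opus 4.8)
The plan is to prove the inequality by induction, reducing to the 2-edge-connected case (where the bound is the classical Clifford inequality with $\lea = 2$) by peeling off leaves of the tree $\GG_X^\Br$ one at a time. By Corollary~\ref{cor: clifford residual}, the statement is symmetric under $L \leftrightarrow \omega_X \otimes L^{-1}$, so I may assume $d \leq g - 1$; and by additivity of both sides over connected components (and since $\lea$ is defined additively), I may assume $X$ is connected, so $\GG_X^\Br$ is a tree. By Lemma~\ref{lma:semistable degree 0} and Corollary~\ref{cor:cliffordindex}, replacing $X$ by its stabilization changes nothing — neither $h^0$, nor $d$, nor (by Lemma~\ref{lma:leaves_stabilization}) $\lea\left(\GG_X^\Br\right)$ — so I may also assume $X$ is stable, which will be convenient since then $\omega_{Y}$ has positive degree on every component of every subcurve of interest.

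\textbf{Base case.} If $\GG_X^\Br$ has at most one vertex, i.e.\ $X$ has no separating nodes and $\lea\left(\GG_X^\Br\right) = 2$, the claim is exactly Theorem~\ref{thm:clifford irreducible}-type input — but $X$ may be reducible. Here I expect to need the full strength of the Dhar-subgraph machinery: pick a component $X_v$ on which $L$ has maximal ``excess'' degree and run the Dhar decomposition. Either $\Dh(v,\md) = \GG_X$, in which case Lemma~\ref{lma:dhar restriction} gives $h^0(X,L) \leq h^0(X_v, L|_{X_v})$ and one concludes by the irreducible Clifford inequality (Theorem~\ref{thm:clifford irreducible}) applied on $X_v$ together with the degree bookkeeping; or $Y := \Dh(v,\md) \subsetneq X$, in which case all global sections vanish along $Y$ (this is the content of how $\Dh$ is built) and Lemma~\ref{lma:subcurves} gives $h^0(X,L) = h^0\left(Y^c, L|_{Y^c}(-Y\cap Y^c)\right)$, where by Lemma~\ref{lma:dhar uniform} the restricted bundle is uniform on the semistable curve $Y^c$, which has strictly fewer components — so induction on the number of components applies, provided I track the leaf count via Lemma~\ref{lma:splitting of number of leaves}(2): since $X$ is $2$-edge-connected, $\lea\left((Y^c)^\Br\right) \leq k$ where $k$ is the number of edges from $Y$ to $Y^c$, and these $k$ nodes contribute $k$ to the degree drop, so the bound propagates.

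\textbf{Inductive step for the tree.} When $\GG_X^\Br$ has a leaf $v_1$ with corresponding $2$-edge-connected subcurve $Y_1$ attached to the rest of $X$ along a single separating node $p_1$, write $Z = Y_1^c$. Since $p_1$ is a separating node, every global section of $L$ either vanishes identically on $Y_1$ or not, and correspondingly Lemma~\ref{lma:subcurves} bounds $h^0(X,L)$ by $h^0(Y_1, L|_{Y_1}) + h^0(Z, L|_Z(-p_1))$ (and is an equality if sections vanish along $Y_1$). The subtlety is that neither piece need have uniform multidegree — $L|_Z(-p_1)$ drops the degree at the component of $Z$ meeting $p_1$ by one, possibly below $0$, and $L|_{Y_1}$ need not be uniform either. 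I resolve this with a case analysis on whether $p_1$ is a base point of $L|_{Y_1}$ and of $L|_Z$: using Corollary~\ref{cor:RR applied1} and a neutral-pair argument (Lemma~\ref{lma:neutral pairs}, Definition~\ref{def: neutral pair}) one shows that, after possibly replacing $L$ by an isomorphic bundle, one can arrange the degree lost at $p_1$ to land where it keeps both restrictions in range; then apply the inductive hypothesis to $Z$ (whose tree $\GG_Z^\Br$ has one fewer leaf, $\lea\left(\GG_Z^\Br\right) = \lea\left(\GG_X^\Br\right) - 1$, after the stabilization that collapses the now-$2$-valent vertex, cf.\ Lemma~\ref{lma:leaves_stabilization} and Lemma~\ref{lma: removing edge}) and to $Y_1$ (which is $2$-edge-connected, so the base case gives $h^0(Y_1, L|_{Y_1}) \leq \deg(L|_{Y_1})/2 + 1$). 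Adding the two bounds, $h^0(X,L) \leq \tfrac{1}{2}\deg(L|_{Y_1}) + 1 + \tfrac{1}{2}(\deg(L|_Z) - 1) + \tfrac{1}{2}(\lea\left(\GG_X^\Br\right) - 1) = \tfrac{d}{2} + \tfrac{1}{2}\lea\left(\GG_X^\Br\right)$, where the $-1$ from removing $p_1$ on $Z$ and the $+1$ Clifford term on $Y_1$ cancel against the leaf count shift.

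\textbf{Main obstacle.} The delicate point is the inductive step at a separating node: making sure the restricted line bundles $L|_{Y_1}$ and $L|_Z(-p_1)$ can simultaneously be brought into (or usefully compared against) the uniform range, since naively one of them leaves the range and the induction hypothesis no longer applies — this is precisely the gap flagged in Remark~\ref{rmk:other statements clifford} in the earlier literature. The resolution must exploit that $p_1$ is a \emph{separating} node, so that $\Pic(X) \simeq \Pic(Y_1)\times\Pic(Z)$ and one has complete freedom to adjust gluing data; combined with Corollary~\ref{cor:RR applied2} (giving that $p_1$ is automatically a base point of $\omega_{Y_1}(p_1)$, the extremal case) and the residual symmetry, this should let me always reduce to a configuration where both pieces are handled by lower cases. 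I also expect the bookkeeping of $\lea$ under stabilization (needed because peeling off $Y_1$ leaves a $2$-valent vertex on $Z$) to require some care, but Lemmas~\ref{lma:leaves_stabilization}, \ref{lma: removing edge}, and \ref{lma:splitting of number of leaves} are exactly tailored to this.
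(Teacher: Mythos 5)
Your overall architecture (reduce to the stable case, split at separating nodes, handle the $2$-edge-connected case with Dhar subgraphs) is close in spirit to the paper's proof, but two steps as you describe them would fail. The more serious one is the base case. The Dhar machinery does not say that all global sections vanish along $Y=\Dh(v,\md)$: the content of Lemma~\ref{lma:dhar restriction} is only that a section which \emph{already} vanishes on $X_v$ must vanish on all of $\Dh(v,\md)$. For a vertex of ``maximal excess degree'' there is no reason any section vanishes on $X_v$, so your second alternative collapses. Your first alternative is also insufficient: when $\Dh(v,\md)=\GG_X$ you only get $h^0(X,L)\leq h^0(X_v,L|_{X_v})$, but a uniform multidegree allows $\md_v$ up to $2g_v-2+\val(v)$, which is outside the Clifford range on $X_v$, and $h^0(X_v,L|_{X_v})$ can then exceed $\tfrac{d}{2}+1$ (e.g.\ $X_v$ rational of high valence with $\md_v=d$ large and degree $0$ on the other components). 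The paper only invokes Dhar when $\md_v=0$, so that $h^0(X_v,L|_{X_v})\leq 1$, and disposes of the remaining $2$-edge-connected case --- all $\md_v>0$ --- by a different device you are missing entirely: normalize at a non-separating node $p$, twist down by \emph{both} preimages $p_1,p_2$ (still uniform because all degrees are positive), and compare $h^0(X,L)$ with $h^0(X^\nu,L^\nu(-p_1-p_2))$ via the neutral-pair Lemma~\ref{lma:neutral pairs} together with $\lea\left((\GG_X-e)^\Br\right)=2$ from Lemma~\ref{lma: removing edge}.

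The inductive step also has a quantitative gap. The identity $\lea\left(\GG_Z^\Br\right)=\lea\left(\GG_X^\Br\right)-1$ is false whenever the neighbour of the peeled leaf is $2$-valent in the tree (that neighbour becomes a new leaf, so the count does not drop; a ``spider'' with three legs of length two has no leaf avoiding this). Correctly one only has $\lea\left(\GG_{Y_1}^\Br\right)+\lea\left(\GG_Z^\Br\right)\leq\lea\left(\GG_X^\Br\right)+2$, i.e.\ $\lea\left(\GG_Z^\Br\right)\leq\lea\left(\GG_X^\Br\right)$, and then your bound $h^0(Y_1,L|_{Y_1})+h^0\left(Z,L|_Z(-p_1)\right)$ from Lemma~\ref{lma:subcurves} overshoots by $1$ precisely when $p_1$ is a base point of $L|_Z$ (so that subtracting $p_1$ costs nothing). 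The paper recovers this unit by using Lemma~\ref{lma:neutral pairs} symmetrically: either $c=1$ and one subtracts $1$ from the sum $h^0(X_1,L_1)+h^0(X_2,L_2)$ outright, or both preimages are base points and one twists down on \emph{both} sides (having first shown, using the exclusion of the ``all sections vanish on a degree-$0$ component'' case, that both adjacent degrees are positive so both twists stay uniform). Your flagged ``main obstacle'' about restrictions leaving the uniform range is indeed resolved the way you guess --- via Corollary~\ref{cor:RR applied1} and the residual --- but the pieces above are not optional bookkeeping; without them the inequality you are proving is off by a positive constant in concrete cases.
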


Note, in particular, that $\lea\left(\GG_X^\Br\right) = 2$ if $X$ is connected and has no separating nodes, and hence every line bundle of uniform multidegree satisfies the classic Clifford inequality in this case. 

\begin{proof}
    We prove the claim by induction on the number of non-loop edges of $\GG_X$. The base of the induction is the case when $X$ is a disjoint union of irreducible curves. In this case, let $X_1, \ldots X_k$ denote the connected components of $X$ corresponding to vertices $v_i$ of $\GG_X$. Then by definition, $\lea \left(\GG_X^\Br\right) = 2k$. On the other hand, the restriction of $L$ to each connected component $X_i$ satisfies the classic Clifford inequality by Theorem~\ref{thm:clifford irreducible}, and hence we get \[h^0(X,L) = \sum_{i = 1}^k h^0\left(X_i, L|_{X_i}\right) \leq \sum_{i = 1}^k\left(\frac{\md_{v_i}}{2} + 1\right) = \frac{d}{2} + \frac{\lea \left(\GG_X^\Br \right)}{2}.\] 
    
    For the induction step, let $L$ be a line bundle with uniform multidegree $\md$ and total degree $d$ on $X$. Using the additivity on connected components of both sides of the claimed inequality as in the base of the induction, it suffices to show the claim for each connected component of $X$. So we assume $X$ to be connected from now on. We deal successively with different cases, that we then exclude going forward.
    
    \medskip
    
    \emph{Step 0: Suppose $X$ is not stable.}
      If $X$ has genus $1$, $L$ needs to have degree $0$ on each irreducible component and the claim holds by the Riemann-Roch Theorem. Otherwise $g \geq 2$ and $X$ contains a rational component $X_v$ with $\val(v) = 2$. Then the dual graph of the stabilization $X_\stab$ of $X$ has less non-loop edges than that of $X$. Furthermore, we have  $\lea \left(\GG_X^\Br \right)  = \lea \left(\GG_{X_\stab}^\Br \right)$ by Lemma~\ref{lma:leaves_stabilization}. Hence the claim follows for $X$ by induction and Corollary~\ref{cor:cliffordindex}. 
    
    \medskip
    
    So we assume from now on, that $X$ is stable.
    
    \medskip
    
    \emph{Step 1: Suppose $\md_v = 0$ for some vertex $v$ and all global sections of $L$ vanish on all of $X_v$.}
    Let $\Dh(v,\md)$ be the Dhar subgraph associated to $v$ and $\md$, as in Subsection~\ref{subsec:Dhar}. Denote by $H$ the induced subgraph containing all vertices of $\GG_X$ not contained in $\Dh(v, \md)$. Let $Y \subset X$ be the subcurve corresponding to $\Dh(v, \md)$, and $Y^c$ the closure of its complement, which is the subcurve corresponding to $H$. Set $k = |Y \cap Y^c|$. By the assumption of Step 1, all global sections of $L$ vanish on all of $X_v$ and hence by Lemma~\ref{lma:dhar restriction} also on all of $Y$. Thus if $Y^c$ is empty, there is nothing to show. Otherwise, we get by Lemma~\ref{lma:subcurves} that \begin{equation} \label{eq:restriction to Yc1}
        h^0(X,L) = h^0\left(Y^c, L|_{Y^c}(- Y \cap Y^c)\right).
    \end{equation}
    By Lemma~\ref{lma:dhar uniform}, $Y^c$ is semistable and the multidegree of $L|_{Y^c}(- Y \cap Y^c)$ is uniform on $Y^c$. Furthermore, $H$ contains less edges than $\GG_X$, and hence we obtain by induction \begin{equation} \label{eq:restriction to Yc2}
        h^0\left(Y^c, L|_{Y^c}(- Y \cap Y^c)\right) \leq \frac{d - k}{2} + \frac{\lea \left(H^\Br \right)}{2}.
    \end{equation}
    Now Lemma~\ref{lma:splitting of number of leaves}~(2) gives $\lea \left(H^\Br \right) \leq \lea \left(\GG_X^\Br \right) + k$, and using \eqref{eq:restriction to Yc2} and \eqref{eq:restriction to Yc1} we get as claimed
    \[h^0(X,L) = h^0\left(Y^c, L|_{Y^c}(- Y \cap Y^c)\right) \leq \frac{d - k}{2} + \frac{\lea \left(H^\Br \right)}{2} \leq \frac{d}{2} + \frac{\lea \left(\GG_X^\Br \right)}{2}.\]
    
    \medskip
    
    So we assume from now on, that whenever $\md_v = 0$, not all global sections of $L$ vanish on all of $X_v$. We may assume the same for the residual $\omega_X \otimes L^{-1}$: by Lemma~\ref{cor: clifford residual} $L$ satisfies the claim if and only if $\omega_X \otimes L^{-1}$ does, and $\omega_X \otimes L^{-1}$ has uniform multidegree since $L$ does, see Remark~\ref{rmk:uniform residual}.
    
    \medskip

    \emph{Step 2: Suppose $\GG_X$ contains a bridge $e$.}
    Let $p$ be the separating node of $X$ corresponding to $e$. Let $X_1, X_2$ be the two connected components of the partial normalization of $X$ at $p$, and $L_1, L_2$ the respective pull backs of $L$. Let $p_1 \in X_1, p_2 \in X_2$ be the preimages of $p$ and denote by $X_v, X_w$ the irreducible components containing $p_1$ and $p_2$, respectively. Let $G_1, G_2$ denote the subgraphs corresponding to the $X_i$, that is, the connected components of $\GG_X - e$. Since we assume $X$ to be stable following Step 0, the $X_i$ are both semistable. By Lemma~\ref{lma:splitting of number of leaves} (1) and since the $G_i$ are connected, we have \begin{equation}\label{eq:step2.1}
        \lea \left(G_1^\Br \right) + \lea \left(G_2^\Br \right) \leq \lea \left(\GG_X^\Br \right) + 2.
    \end{equation} On the other hand, Lemma~\ref{lma:neutral pairs} gives \begin{equation}\label{eq:step2.2}
        h^0(X,L) \leq h^0(X_1, L_1) + h^0(X_2, L_2) - c,
    \end{equation} 
    where $c = 0$ if $p_1$ is a base point of $L_1$ and $p_2$ is a base point of $L_2$ and $c = 1$ otherwise.
    
    Suppose first that $c = 0$ and thus that $p_i$ is a base point of $L_i$ and similarly all global sections of $L$ vanish at $p$. We claim that if $\md_v = 0$, then all global sections of $L$ need to vanish along $X_v$, which we excluded after Step 1. Indeed, consider the restriction map $H^0(X, L) \to H^0(X_v, L|_{X_v})$. If $L|_{X_v}$ has no global sections, then all global sections of $L$ vanish along $X_v$. Otherwise we need to have $L|_{X_v} \simeq \cO_{X_v}$ since $\deg(L|_{X_v}) = 0$. In particular, $h^0(X_v, L|_{X_v}) = 1$ and $L|_{X_v}$ is base point free. Since $L_1$ does have a base point on $X_v$, it follows that the restriction map $H^0(X, L) \to H^0(X_v, L|_{X_v})$ is not surjective and hence the zero map. Thus also in this case all global sections of $L$ vanish along $X_v$. So we may assume $\md_v > 0$ and, by the same argument for $X_w$, $\md_w > 0$. 
    
    With this assumption $L_1(-p_1)$ and $L_2(-p_2)$ have uniform multidegree on $X_1$ and $X_2$, respectively. By induction and since the $p_i$ are base points we get: \[h^0\left(X_i, L_i\right) = h^0\left(X_i,L_i(-p_i)\right) \leq \frac{\deg(L_i) - 1}{2} + \frac{\lea\left(G_i^\Br\right)}{2},\]
    Using this together with \eqref{eq:step2.2} and \eqref{eq:step2.1} then gives as claimed
    \[ h^0(X,L) \leq h^0(X_1, L_1) + h^0(X_2, L_2) \leq \frac{\deg(L_1) - 1}{2} + \frac{\lea\left(G_1^\Br\right)}{2} + \frac{\deg(L_2) - 1}{2} + \frac{\lea\left(G_2^\Br\right)}{2} \leq \frac{d}{2} + \frac{\lea \left(\GG_X^\Br \right)}{2}\]
    
    Suppose next, that $c = 1$. In this case, it suffices to show that \begin{equation}\label{eq:step2.3}
        h^0(X_i, L_i) \leq \frac{d_i}{2} + \frac{\lea\left(G_i^\Br\right)}{2},
    \end{equation} which inserted in \eqref{eq:step2.2} gives the claim together with \eqref{eq:step2.1}. To show \eqref{eq:step2.3}, observe that it follows immediately by induction if $L_1$ and $L_2$ have uniform multidegree on $X_1$ and $X_2$. Otherwise, if, say, $L_1$ is not uniform, we need to have $\md_v = 2g_v - 2 + \val(v)$, since the valence of $v$ decreases by only one in passing from $\GG_X$ to $G_1$. In particular, $L_1(-p_1)$ has uniform multidegree on $X_1$. Consider the residual $\omega_X \otimes L^{-1}$. It has degree $0$ on $X_v$ and by the assumption following Step 1, not all global sections of $\omega_X \otimes L^{-1}$ vanish on $X_v$. Hence it has no base points on $X_v$. Observe that \[\left(\omega_X \otimes L^{-1}\right)|_{X_1} \simeq \left(\omega_{X_1} \otimes L_1^{-1}\right)(p).\] Thus since $p$ is not a base point of $\left(\omega_X \otimes L^{-1}\right)|_{X_1}$, it is also not a base point of $\left(\omega_{X_1} \otimes L_1^{-1}\right)(p)$. By Corollary~\ref{cor:RR applied1} this implies that $p$ is a base point of $L_1$, and hence $h^0(X_1, L_1) = h^0\left(X_1, L_1(-p)\right)$. Since $L_1(-p)$ has uniform multidegree, \eqref{eq:step2.3} follows by induction for $L_1$. Analogously for $L_2$.
    
    \medskip
    
    So we assume from now on, that $\GG_X$ contains no bridges, that is, it is $2$-edge-connected. In this case, we need to show that $L$ satisfies the classic Clifford inequality $h^0(X,L) \leq \frac{d}{2} + 1$.
    
    \medskip
    
    \emph{Step 3: Suppose $\md_v = 0$ for some vertex $v$ of $\GG_X$.}
    We argue similarly as in Step 1, but now may employ the stronger version Lemma~\ref{lma:splitting of number of leaves} (2) instead of Lemma~\ref{lma:splitting of number of leaves} (1). As before, we denote by $\Dh(v, \md)$ the Dhar set of $v$ and $\md$, by $Y$ the subcurve corresponding to $\Dh(v, \md)$ and by $Y^c$ the closure of its complement corresponding to the induced subgraph $H$. Set $k = |Y \cap Y^c|$. Then by Lemma~\ref{lma:splitting of number of leaves} (2), we have 
    \begin{equation}\label{eq:Step3.1}
        \lea\left(H^\Br\right) \leq k.
    \end{equation} By Lemma~\ref{lma:dhar restriction} we have that $h^0(Y, L|_Y) = h^0(X_v, L|_{X_v}) = 1$ and thus we get by Lemma~\ref{lma:subcurves} that \begin{equation}\label{eq:Step3.2}
        h^0(X,L) \leq 1  + h^0\left(Y^c, L|_{Y^c}(-Y \cap Y^c)\right).
    \end{equation}
    By Lemma~\ref{lma:dhar uniform}, $Y^c$ is semistable and $L|_{Y^c}(-Y \cap Y^c)$ has uniform multidegree on $Y^c$ and hence by induction and \eqref{eq:Step3.1} \begin{equation}\label{eq:Step3.3}
        h^0(Y^c, L|_{Y^c}(-Y \cap Y^c) \leq \frac{d - k}{2} + \frac{\lea\left(H^\Br\right)}{2} \leq \frac{d}{2}.
    \end{equation}
    The claim in this case follows by combining \eqref{eq:Step3.3} and \eqref{eq:Step3.2}.
    \medskip
    
    So we assume from now on, that $\md_v > 0$ for all vertices $v$ of $\GG_X$.
    
    \medskip
    
    \emph{Step 4: Conclusion.}
    Let $e$ be a non-loop edge of $\GG_X$ corresponding to a node $p$ of $X$. Let $X^\nu$ be the partial normalization of $X$ at $p$ and $L^\nu$ the pull back of $L$ to $X^\nu$. Since we assume that $X$ is stable and contains no separating nodes, $X^\nu$ is semistable and connected. Denote by $p_1, p_2$ the two preimages of $p$ in $X^\nu$. Then we get by Lemma~\ref{lma:neutral pairs}: \begin{equation}\label{eq:Step4.1}
        h^0(X,L) = h^0\left(X^\nu,L^\nu\right) - c,
    \end{equation} where $c = 1$ if $p_1, p_2$ are not a neutral pair of $L^\nu$ and $c = 0$ otherwise (see Definition~\ref{def: neutral pair} for the definition of neutral pairs). Furthermore we have by definition that 
    \begin{equation}\label{eq:Step4.2}
     h^0\left(X^\nu,L^\nu\right) = h^0\left(X^\nu,L^\nu(- p_1 - p_2)\right) + c',
    \end{equation}
    where $c' \leq 1$ if the $p_i$ are a neutral pair of $L^\nu$, and $c' = 2$ otherwise. Thus in any case $c' - c \leq 1$. Since $\md_v > 0$ for all $v$ by assumption and $e$ is not a loop edge, we have that $L^\nu(-p_1 - p_2)$ has uniform multidegree on $X^\nu$. Furthermore, since $\GG_X$ is $2$-edge-connected,  Lemma~\ref{lma: removing edge} gives $\lea\left((\GG_X - e)^\Br\right) = 2$. Since $\GG_X - e$ is the dual graph of $X^\nu$, we get by combining \eqref{eq:Step4.1},  \eqref{eq:Step4.2} and the induction assumption as claimed: \[h^0(X,L) \leq h^0\left(X^\nu,L^\nu\right) - c \leq h^0\left(X^\nu,L^\nu(- p_1 - p_2)\right) + c' - c \leq \frac{d - 2}{2} + 1 + 1.\]
\end{proof}

\subsection{Generic behaviour} \label{subsec: generic}
 Recall that we denote by $\Pic^{\md}(X) \subset \Pic(X)$ the irreducible component of the Picard scheme, that parametrizes line bundles of multidegree $\md$.

\begin{prop}\label{prop:generic case}
Let $X$ be a connected semistable curve and $\md$ a uniform multidegree on $X$. Then there is a dense open subset $U$ of $\Pic^{\md}(X)$ such that every $L$ contained in $U$ satisfies the classic Clifford inequality \[h^0(X,L) \leq \frac{d}{2} + 1.\]
\end{prop}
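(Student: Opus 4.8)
The plan is to follow the proof of Theorem~\ref{thm:clifford for uniform multidegrees} step by step, but to work throughout with the \emph{generic} line bundle of multidegree $\md$, which lets us replace the bound $\frac{d_i}{2}+\frac{\lea(G_i^\Br)}{2}$ coming from the inductive hypothesis there by the better bound $\frac{d_i}{2}+1$ coming from the inductive hypothesis here. First I would reduce to the case that $X$ is connected (both sides are additive over connected components) and stable (Lemma~\ref{lma:semistable degree 0} identifies $\Pic^{\md}(X)$ with $\Pic^{\md'}(X_\stab)$ compatibly with $h^0$, hence with dense open subsets). By the exact sequence~\eqref{eq:ses_pic}, $\Pic^{\md}(X)$ is an extension of a product of Jacobians of the components of $X^\nu$ by a torus, hence irreducible; since $L\mapsto h^0(X,L)$ is upper semicontinuous, the locus in the statement is open, so it suffices to prove the inequality for the generic $L$. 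I would argue by induction on the number of non-loop edges of $\GG_X$. The base case is $X$ irreducible, which is Theorem~\ref{thm:clifford irreducible} (which even yields strict inequality on a dense open); and if $\GG_X$ is $2$-edge-connected there is nothing to prove, since then $\lea(\GG_X^\Br)=2$ and Theorem~\ref{thm:clifford for uniform multidegrees} already gives $h^0(X,L)\le\frac{d}{2}+1$ for every $L$.

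The first reduction (``Step A'') is the analogue of Steps~1 and~3 in the proof of Theorem~\ref{thm:clifford for uniform multidegrees}. Suppose there is a component $X_v$ with $\md_v=0$ on which all global sections of the generic $L$ vanish, and let $Y$ be the subcurve corresponding to the Dhar subgraph $\Dh(v,\md)$. By Lemma~\ref{lma:dhar restriction} all global sections of $L$ vanish on $Y$; if $Y=X$ we are done since $h^0(X,L)\le h^0(X_v,L|_{X_v})\le 1$, and otherwise Lemma~\ref{lma:subcurves} gives $h^0(X,L)=h^0(Y^c,L|_{Y^c}(-Y\cap Y^c))$. By Lemma~\ref{lma:dhar uniform}, $Y^c$ is semistable and this multidegree is uniform, $Y^c$ has fewer non-loop edges, and the map $\Pic^{\md}(X)\to\Pic^{\md'}(Y^c)$, $L\mapsto L|_{Y^c}(-Y\cap Y^c)$, being surjective (it factors through the surjection onto $\Pic(Y^\nu)$), carries generic to generic; so the inductive hypothesis gives $h^0(X,L)\le\frac{d-|Y\cap Y^c|}{2}+1\le\frac{d}{2}+1$. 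By Corollary~\ref{cor: clifford residual} and Remark~\ref{rmk:uniform residual} the same reduction applies whenever $\md_v=2g_v-2+\val(v)$ and all global sections of the residual $\omega_X\otimes L^{-1}$ vanish on $X_v$. From now on I assume we are past Step~A.

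In ``Step B'' I choose a separating node $p$ (one exists, since $\GG_X$ is not $2$-edge-connected), let $X_1,X_2$ be the connected components of the partial normalization at $p$, and let $v_1\ni p_1$ in $X_1$ and $w\ni p_2$ in $X_2$ be the components meeting the preimages of $p$. As $p$ is separating, $\Pic^{\md}(X)\cong\Pic^{\md_1}(X_1)\times\Pic^{\md_2}(X_2)$, so the generic $L$ corresponds to a pair of generic line bundles $(L_1,L_2)$, and since a global section of $L$ is a pair $(s_1,s_2)$ with $s_1(p_1)=s_2(p_2)$ one gets $h^0(X,L)=h^0(X_1,L_1)+h^0(X_2,L_2)-\epsilon$, where $\epsilon=1$ unless $p_i$ is a base point of $L_i$ for both $i$, in which case $\epsilon=0$ (cf.\ Lemma~\ref{lma:neutral pairs}). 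If the generic $L$ has $\epsilon=1$, say with $p_1$ not a base point of $L_1$, then $\md_{v_1}<2g_{v_1}-2+\val(v_1)$: otherwise $\omega_X\otimes L^{-1}$ has degree $0$ on $X_{v_1}$, and by Corollary~\ref{cor:RR applied1} the point $p_1$ is a base point of $(\omega_{X_1}\otimes L_1^{-1})(p_1)=(\omega_X\otimes L^{-1})|_{X_1}$, so a section of $(\omega_X\otimes L^{-1})|_{X_1}$ vanishing at $p_1$ vanishes on all of the degree-$0$ component $X_{v_1}$, hence all sections of $\omega_X\otimes L^{-1}$ vanish on $X_{v_1}$ and we are back in the residual version of Step~A. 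Thus $\md_1$ is uniform on $X_1$; dealing with $X_2$ in the same way (when $\md_w$ is maximal, either we fall into Step~A for the residual or Corollary~\ref{cor:RR applied1} forces $p_2$ to be a base point of $L_2$, and then $L_2(-p_2)$ has uniform multidegree of total degree $d_2-1$), the inductive hypothesis gives $h^0(X,L)\le(\tfrac{d_1}{2}+1)+(\tfrac{d_2}{2}+1)-1=\tfrac{d}{2}+1$. If instead the generic $L$ has $\epsilon=0$, so each $p_i$ is a base point of $L_i$, then $h^0(X_i,L_i)=h^0(X_i,L_i(-p_i))$; moreover $\md_{v_1}>0$, since for $\md_{v_1}=0$ a section of the degree-$0$ bundle $L_1|_{X_{v_1}}$ vanishing at $p_1$ vanishes on all of $X_{v_1}$, which would put us in Step~A, and likewise $\md_w>0$. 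Hence $L_i(-p_i)$ has uniform multidegree of total degree $d_i-1$ on $X_i$, so the inductive hypothesis gives $h^0(X,L)=h^0(X_1,L_1(-p_1))+h^0(X_2,L_2(-p_2))\le(\tfrac{d_1-1}{2}+1)+(\tfrac{d_2-1}{2}+1)=\tfrac{d}{2}+1$.

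The step I expect to be the main obstacle is exactly the bookkeeping in Step~B: one must keep track of whether $\md_{v_1}$ and $\md_w$ are zero or maximal and combine this with the location of base points at $p_1,p_2$, so as to ensure that the multidegrees (or their twists by $-p_i$) to which the inductive hypothesis is applied are still uniform — this is where Corollaries~\ref{cor:RR applied1} and~\ref{cor:RR applied2} and the Step-A reduction are used to clear away the boundary cases. A secondary but necessary point is to check at each reduction that ``generic on $X$'' restricts or pulls back to ``generic'' on the smaller curve, which follows from the surjectivity of the maps between Picard schemes recorded in Section~\ref{sec:conventions}.
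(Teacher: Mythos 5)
Your overall plan---rerunning the induction of Theorem~\ref{thm:clifford for uniform multidegrees} with the stronger generic inductive bound $\frac{d_i}{2}+1$---is reasonable, and most of your Step~B (splitting at a separating node and using Corollary~\ref{cor:RR applied1} to clear the boundary cases of uniformity) is sound. The genuine gap is in Step~A. There you apply the inductive hypothesis to $Y^c$, the complement of the Dhar subcurve, and conclude $h^0\left(Y^c, L|_{Y^c}(-Y\cap Y^c)\right) \leq \frac{d-|Y\cap Y^c|}{2}+1$. But the statement being proved carries the constant $+1$ only for \emph{connected} curves, and $Y^c$ can be disconnected: if it has $m$ connected components, applying the inductive hypothesis to each gives only $h^0 \leq \frac{d-\deg(L|_Y)-|Y\cap Y^c|}{2}+m$, which exceeds $\frac{d}{2}+1$ once $m\geq 3$ and the components meet $Y$ in too few nodes. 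Your argument genuinely reaches this situation: let $X_v$ have genus $1$ with $\md_v=0$ and three genus-$1$ tails $X_1,X_2,X_3$, each attached at a single separating node, with $\md_{v_i}=1$. The generic $L$ restricts to a nontrivial degree-$0$ bundle on $X_v$, so all its sections vanish on $X_v$ and Step~A is triggered; the Dhar subgraph is $\{v\}$, $Y^c$ is the disjoint union of the three tails, and your bound becomes $\frac{3-3}{2}+3=3>\frac{3}{2}+1$. (The true generic value is $0$, so the proposition holds here, but your argument does not establish it.) Since Step~B falls back on Step~A (and its residual version) in several of its subcases, this defect propagates through the whole proof.

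The paper's proof avoids the Dhar reduction altogether, which is why it does not meet this problem. When every non-loop edge of $\GG_X$ is a bridge, it normalizes at an edge adjacent to a \emph{leaf}, so that one side is a single irreducible component $X_v$ and the other side $X'$ stays connected; the extra $+1$ from having two pieces is paid for by the \emph{strict} form of the generic Clifford inequality on the irreducible piece (Theorem~\ref{thm:clifford irreducible} gives $h^0(X_v,L|_{X_v})\leq\frac{\md_v}{2}+\frac{1}{2}$ generically, or Riemann--Roch when $\md_v=2g_v-1$), combined with Lemma~\ref{lma:neutral pairs}. When a non-bridge non-loop edge exists, it normalizes there, keeping the curve connected, and uses that one of the four twists $L^\nu(-\epsilon_1 p_1-\epsilon_2 p_2)$ has uniform multidegree. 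To rescue your version you would need either to show that a vertex $v$ with connected Dhar complement can always be chosen, or to strengthen the inductive statement so that it keeps track of the number of connected components; neither is automatic from what you wrote.
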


\begin{proof}
    We prove the claim by induction on the number of non-loop edges in $\GG_X$. As base case we use the case $\lea\left(\GG_X^\Br\right) = 2$, in which case Theorem~\ref{thm:clifford for uniform multidegrees} gives the claim for any line bundle of uniform multidegree. 
    
    So suppose $\lea\left(\GG_X^\Br\right) \geq 3$. We choose a non-loop edge $e$ of $\GG_X$ and consider the partial normalization $X^\nu$ of $X$ at the node $p$ corresponding to $e$. Denote by $p_1, p_2$ the two preimages of $p$ in $X^\nu$ and by $L^\nu$ the pull back of a line bundle $L$ of multidegree $\md$.
    
    Arguing as in Step 0 in the proof of Theorem~\ref{thm:clifford for uniform multidegrees}, we may assume that $X$ is stable.
    \medskip
    
    \emph{Step 1: Suppose every non-loop edge of $\GG_X$ is a bridge.} We may assume that $e$ is the unique non-loop edge adjacent to a vertex $v$ of $\GG_X$. Then $X^\nu$ has two connected components, $X_v$ and $X'$. Since $X$ is stable, $X'$ is semistable. Since either $L|_{X'}$ or $L|_{X'}(-p_1)$ has uniform multidegree on $X'$, we get by induction for $L|_{X'}$ general
    \begin{equation}\label{eq:step1.1'}
        h^0\left(X', L|_{X'}\right) \leq \frac{\deg(L|_{X'})}{2} + c,
    \end{equation}
    with $c = 1$ if $p_1$ is a base point of $L|_{X'}$ and $c = \frac{3}{2}$ otherwise. 
    
    On the other hand, $\md_v \leq 2g_v - 1$, thus either by the Clifford inequality~\ref{thm:clifford irreducible} for irreducible curves (if $\md_v \leq 2g_v - 2$) or by the Riemann-Roch Theorem~\ref{thm:Riemann Roch} (if $\md_v = 2g_v - 1$), we get for $L|_{X_v}$ general 
    \begin{equation}\label{eq:step1.2'}
    h^0\left(X_v, L|_{X_v}\right) \leq \frac{\md_v}{2} + \frac{1}{2}.
    \end{equation}
    
    Suppose first $h^0\left(X_v, L|_{X_v}\right) = 0$. Then $h^0(X,L) = h^0\left(X', L|_{X'}\right) - c'$ by Lemma~\ref{lma:neutral pairs}, with $c' = 0$ if $p_1$ is a base point of $L|_{X'}$ and $c' = 1$ otherwise. Thus the claim follows from \eqref{eq:step1.1'}. 
    
    Now suppose $h^0\left(X_v, L|_{X_v}\right) > 0$. Then for $L|_{X_v}$ general, $p_2$ is not a base point of $L|_{X_v}$.
    Thus we get by Lemma~\ref{lma:neutral pairs}: 
    \begin{equation*}\label{eq:step1.3'}
    h^0(X,L) \leq h^0\left(X', L'|_{X'}\right) + h^0\left(X_v, L|_{X_v}\right) - 1.
    \end{equation*}
    Inserting \eqref{eq:step1.1'} and \eqref{eq:step1.2'} this gives as claimed 
    \[ h^0(X,L) \leq h^0\left(X', L'|_{X'}\right) + h^0\left(X_v, L|_{X_v}\right) - 1 \leq \frac{\deg(L|_{X'})}{2} + \frac{3}{2} + \frac{\md_v}{2} + \frac{1}{2} - 1 = \frac{d}{2} + 1. \]

    \medskip

   \emph{Step 2: Conclusion.} It remains to show the claim if $e$ is neither a bridge nor a loop. Since $X$ is stable and $e$ is not a loop, $X^\nu$ is semistable. We distinguish two cases.
   
       Case 1: both $p_1$ and $p_2$ are base points of $L^\nu$. Then by Lemma~\ref{lma:neutral pairs} we have \[h^0(X,L) = h^0\left(X^\nu, L^{\nu}\right) = h^0\left(X^\nu, L^{\nu}(-p_i)\right) = h^0\left(X^\nu, L^{\nu}(-p_1 - p_2)\right).\] Since at least one of $L^{\nu}, L^\nu(-p_1), L^\nu(-p_2)$ and $L^\nu(-p_1 - p_2)$ has uniform multidegree on $X^\nu$, the claim follows by induction for $L$ general.

       Case 2: Otherwise, and again by Lemma~\ref{lma:neutral pairs}, we have \begin{equation}\label{eq:Step2.1'}
           h^0(X,L) = h^0(X^\nu, L^\nu) - 1,
       \end{equation} if $L$ is general. Indeed, if $p_1$ and $p_2$ are not a neutral pair $L^\nu$, then \eqref{eq:Step2.1'} holds for every $L$ that pulls back to $L^\nu$. If on the other hand $p_1$ and $p_2$ are a neutral pair of $L^\nu$, they are by assumption not base points. Then the last claim of Lemma~\ref{lma:neutral pairs} ensures that \eqref{eq:Step2.1'} holds for all but one $L$ that pulls back to $L^\nu$. Since $e$ is not a bridge, the pull-back map $\Pic (X) \to \Pic (X^\nu)$ has fiber $k^*$ and hence \eqref{eq:Step2.1'} holds for a general $L$. We again have that at least one of $L^{\nu}, L^\nu(-p_1), L^\nu(-p_2)$ and $L^\nu(-p_1 - p_2)$ is uniform, and hence induction gives 
       \begin{equation}\label{eq:Step2.2'}
       h^0(X^\nu,L^\nu) \leq \frac{d}{2} + 2.
       \end{equation}
   Inserting \eqref{eq:Step2.2'} in \eqref{eq:Step2.1'} then gives the claim.
\end{proof}

\subsection{Relation to stable multidegrees} \label{subsec:semistable multidegrees}

Finally, we discuss another important class of multidegrees, the stable ones. They are essential in the construction of universal compactified Jacobians \cite{Caporasocompactification}.
We restrict to stable multidegrees of total degrees $d = g - 1$ or $d =  g$, since all phenomena already appear and the definition of stability is significantly easier. We refer the interested reader to \cite[\S 1.3]{caporasotheta}, \cite[\S\S 3.2 and 3.3]{CC}, \cite[\S 5.4]{CPS} or \cite[\S\S 4 and 5]{chr1} for details about semistability and compactified Jacobians in these two cases.

\begin{defn} \label{def:stable}
   Let $X$ be a stable curve and $\md$ a multidegree of total degree $d \in \left \{g-1, g \right\}$. Then $\md$ is called \emph{stable}, if for every proper subcurve $Y \subset X$ of genus $g(Y)$ we have \[\sum_{X_v \subset Y} \md_v \geq g(Y).\]
\end{defn}

Stable multidegrees of total degree $g-1$ exist if and only if $X$ contains no separating nodes. Stable multidegrees of total degree $g$ exist on any $X$. 

In general, stable multidegrees are not uniform and \emph{vice versa}. In particular, as the following example shows, stable multidegrees need not satisfy the Clifford inequality for uniform multidegrees established in Theorem~\ref{thm:clifford for uniform multidegrees} (see also \cite[Example 4.15 and 4.17]{caporasosemistable}). 

\begin{ex}\label{ex:semistable clifford}
 	Let $\GG_X$ and $\md$ be as in Figure~\ref{figure4}. Let $v_1$ be the $9$-valent vertex in the middle and $v_i$, $2 \leq i \leq 10$, the $2$-valent vertices of weight $1$. Denote by $X_i$ the irreducible components of $X$ corresponding to the vertices $v_i$ and let $p_i = X_1 \cap X_i$. Let $L$ have multidegree $\md$ and satisfy $L|_{X_i} = \mathcal O_{X_i}(p_i)$ for $i \geq 2$ and an arbitrary choice of gluing data. One checks that \[h^0(X,L) = 9 > 8,5 = \frac{d}{2} + 1.\] But $X$ is stable without separating nodes and $\md$ is a stable multidegree. Hence the Clifford inequality for uniform multidegrees of Theorem~\ref{thm:clifford for uniform multidegrees} would be $h^0(X,L) \leq \frac{d}{2} + 1$ and is not satisfied.
\end{ex}

\tikzset{every picture/.style={line width=0.75pt}}	

\begin{figure}[ht]
    \begin{tikzpicture}[x=0.75pt,y=0.75pt,yscale=-0.75,xscale=0.75]
\import{./}{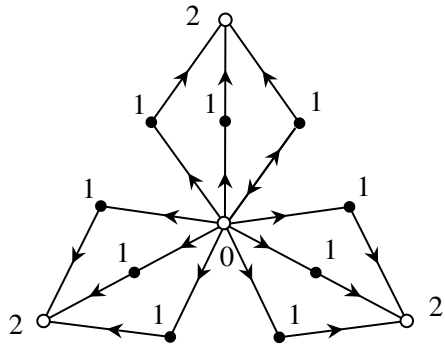}
\end{tikzpicture}	
    \caption{
       The graph $\GG_X$ and the multidegree $\md$ of Example~\ref{ex:semistable clifford}. Bold vertices have weight $1$ and vertices drawn as circles weight $0$. We included the generalized orientation giving $\md$, and stability of $\md$ follows from \cite[Lemma 3.3.2]{CC}.
    } \label{figure4}
\end{figure}
 	
A \emph{general} line bundle in $\Pic^{\md}(X)$ with $\md$ stable satisfies the Clifford inequality. In fact, \cite{chr1} establishes the much stronger claim that a general semistable line bundle $L$ is non-special, i.e., $h^0(X,L) = \max \left\{0, d - g + 1 \right\}$, which in particular implies the classic Clifford inequality for general line bundles if $0 \leq d \leq 2g - 2$.
 
 There are two special cases, for which the bound on \emph{all} line bundles in $\Pic^{\md}(X)$ established in Theorem~\ref{thm:clifford for uniform multidegrees} applies also to stable multidegrees $\md$:
 	
 	\begin{lma} \label{lma:stable in degree g-1}
 	Let $X$ be a stable curve and $\md$ a stable multidegree of total degree $d$. Then $\md$ is uniform if either $d = g - 1$, or $d = g$ and $X$ contains no irreducible components of geometric genus $0$.
 	\end{lma}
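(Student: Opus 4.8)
The plan is to extract the two inequalities $0\le\md_v\le 2g_v-2+\val(v)$ defining uniformity (Definition~\ref{def:uniform}) directly from the stability condition of Definition~\ref{def:stable}, applied to the two subcurves $X_v$ and $X_v^c=\overline{X\setminus X_v}$. First I would dispose of the case that $X$ is irreducible: then $X$ has no proper nonempty subcurve, so stability is vacuous, but a stable irreducible curve has $\deg\omega_X=2g-2>0$ and hence $g\ge 2$; then $0\le d\le 2g-2$ — which is exactly uniformity on the unique vertex — holds for both $d=g-1$ and $d=g$. So from now on $X$ is reducible, and $X_v$ and $X_v^c$ are nonempty proper subcurves of $X$ for every vertex $v$.

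For the \emph{lower bound}, I would apply stability to $Y=X_v$: this gives $\md_v=\sum_{X_w\subseteq X_v}\md_w\ge g(X_v)\ge 0$, where $g(X_v)$ denotes the arithmetic genus of the subcurve $X_v$ (which is at least the weight $g_v$, hence $\ge 0$). This step uses neither the value of $d$ nor the hypothesis on genus-$0$ components.

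For the \emph{upper bound}, applying stability to $Y=X_v^c$ gives $d-\md_v=\sum_{X_w\subseteq X_v^c}\md_w\ge g(X_v^c)$, so $\md_v\le d-g(X_v^c)$, and it remains to compare $d-g(X_v^c)$ with $2g_v-2+\val(v)$. Two elementary identities handle this: adjunction on $X_v$ (recalled in Section~\ref{sec:conventions}), giving $2g_v-2+\val(v)=\deg(\omega_X|_{X_v})=\deg\omega_{X_v}+k=2g(X_v)-2+k$ with $k=|X_v\cap X_v^c|$; and additivity of arithmetic genus along these $k$ nodes, $g=g(X_v)+g(X_v^c)+k-1$. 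Substituting the second identity and comparing with the first, the inequality $\md_v\le 2g_v-2+\val(v)$ reduces to $d\le g+g(X_v)-1$. For $d=g-1$ this is $g(X_v)\ge 0$, always true; for $d=g$ it is $g(X_v)\ge 1$, which holds because $g(X_v)\ge g_v\ge 1$ when $X$ has no component of geometric genus $0$. This completes the verification that $\md$ is uniform.

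I do not anticipate a real obstacle here: the argument is a short unwinding of the definitions together with genus-additivity and adjunction. The only points deserving care are bookkeeping the distinction between $g(X_v)$ (arithmetic genus of the subcurve), the weight $g_v$ (geometric genus), and $\val(v)$ (which counts loops at $v$ twice), and making sure the test subcurve $X_v^c$ is genuinely proper and nonempty — which is precisely why the irreducible case is split off at the start. The same computation makes transparent why the extra hypothesis is needed when $d=g$: a smooth rational component $X_v$ has $g(X_v)=0<1$, and then $d-g(X_v^c)$ can exceed $2g_v-2+\val(v)$, so that a stable multidegree of total degree $g$ need not be uniform — for instance $\md=(2,0)$ on two copies of $\PP^1$ glued at three points.
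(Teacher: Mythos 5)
Your proof is correct and follows essentially the same route as the paper: the lower bound comes directly from stability, and the upper bound from applying the stability inequality to $X_v^c$ and rewriting $g(X_v^c)$ via genus additivity to compare with $2g_v-2+\val(v)$. Your version is slightly more careful in distinguishing the arithmetic genus $g(X_v)$ from the vertex weight $g_v$ when $v$ carries loops (the paper's formula $g(X_v^c)=g-g_v-\val(v)+1$ implicitly assumes no loops, but the resulting bound is still valid, only weaker), and your treatment of the irreducible case and the closing counterexample are correct additions.
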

 	\begin{proof}
    Definition~\ref{def:stable} always gives $\md_v \geq 0$ for all vertices $v$. It remains to show $\md_v \leq 2g_v - 2 +\val(v)$.
 	To see this, we apply Definition~\ref{def:stable} to $X_v^c$, the closure of the complement of $X_v$ in $X$. We get \[d - \md_v \geq g(X^c_v).\]
 	Since $g(X^c_v) = g - g_v - \val(v) + 1$ we obtain \[\md_v \leq d - g(X^c_v) = d - g + g_v + \val(v) - 1.\]
 	Now if either $d = g - 1$ or $d=g$ and $g_v \geq 1$, the claim follows.
 	\end{proof}
 	
 	\begin{rmk}\label{rmk: compactified Jacobians}
 	Suppose $X$ contains no separating nodes. Then the compactified Jacobian $\overline P_X^{g -1}$ in degree $g - 1$ parametrizes line bundles with stable multidegree of total degree $g - 1$ away from the boundary. By  Theorem~\ref{thm:clifford for uniform multidegrees} and Lemma~\ref{lma:stable in degree g-1}, all such line bundles satisfy the classic Clifford inequality. The boundary of $\overline P_X^{g -1}$ on the other hand parametrizes torsion-free rank $1$ sheaves, that satisfy a related stability condition. They need not satisfy the classic Clifford inequality. For example, suppose $X$ contains three irreducible components, each smooth and of genus $1$, with any two of them intersecting in a single node. Let $F$ be the sheaf that modulo torsion restricts to the structure sheaf on each of the $3$ irreducible components and fails to be locally free at all three nodes. Then $\deg(F) = 3$ and $h^0(X,F) = 3 > \frac{3}{2} + 1$ even though $X$ contains no separating nodes. 
 	\end{rmk}
 	
\providecommand{\bysame}{\leavevmode\hbox to3em{\hrulefill}\thinspace}
\providecommand{\MR}{\relax\ifhmode\unskip\space\fi MR }
\providecommand{\MRhref}[2]{%
  \href{http://www.ams.org/mathscinet-getitem?mr=#1}{#2}
}
\providecommand{\href}[2]{#2}

\end{document}